\newtheorem{thm}{Theorem}[section]
\newtheorem{cor}[thm]{Corollary}
\newtheorem{lem}[thm]{Lemma}
\newproof{proof}{Proof}
\newtheorem{pps}[thm]{Proposition}
\def\<{\langle}
\def\>{\rangle}
\begin{document}

\begin{frontmatter}

\title{Interplay between complex symmetry and Koenigs eigenfunctions}

\author{S. Waleed Noor\corref{cor1}}
\ead{waleed@ime.unicamp.br}
\author{Osmar R. Severiano}
\address{IMECC, Universidade Estadual de Campinas, Campinas-SP, Brazil.}

\cortext[cor1]{Corresponding author. Telephone: +55 19 992360945.}

\ead{osmar.rrseveriano@gmail.com}

\begin{abstract}
We investigate the relationship between the complex symmetry of composition operators $C_{\phi}f=f\circ \phi$ induced on the classical Hardy space $H^2(\mathbb{D})$ by an analytic self-map $\phi$ of the open unit disk $\mathbb{D}$ and its Koenigs eigenfunction. A generalization of orthogonality known as \emph{conjugate-orthogonality} will play a key role in this work. We show that if $\phi$ is a Schr\"{o}der map (fixes a point $a\in \mathbb{D}$ with $0<|\phi'(a)|<1$) and $\sigma$ is its Koenigs eigenfunction, then $C_{\phi}$ is complex symmetric if and only if $(\sigma^n)_{n\in \mathbb{N}}$ is complete and conjugate-orthogonal in $H^2(\mathbb{D})$. We study the conjugate-orthogonality of Koenigs sequences with some concrete examples. We use these results to show that commutants of complex symmetric composition operators with Schr\"{o}der symbols consist entirely of complex symmetric operators.
\end{abstract}

\begin{keyword}
Complex symmetric operator \sep composition operator \sep Koenigs eigenfunction.
\end{keyword}

\end{frontmatter}

\section{Introduction}
Let $X$ be a vector space of analytic functions on the open unit disk $\mathbb{D}$ and $\phi$ an analytic self-map of $\mathbb{D}$. The \textit{composition operator} $C_\phi$ with \textit{symbol} $\phi$ is defined as
\begin{align*}
C_{\phi}f=f\circ \phi \hspace{0.3cm}  \textit{for}  \  \ f\in X.
\end{align*} 
 Operators of this type have been studied on a variety of spaces and in great detail. The objective is to study the interaction between the operator theortic properties of $C_\phi$ and function theoretic properties of $\phi$. It is well-known that $C_\phi$ is bounded on the classical Hardy Hilbert space $H^2(\mathbb{D})$ whenever $\phi$ is an analytic self-map of $\mathbb{D}.$ The monographs of Shapiro \cite{Shapiro text} and Cowen and McCluer \cite{Cowen text} contain detailed accounts of the subject. An analytic self map $\phi$ of $\mathbb{D}$ is called a \textit{Schr\"{o}der map} if it fixes a point $a\in \mathbb{D}$  with $0<|\phi'(a)|<1$. Let $H(\mathbb{D})$ denote Frechet space of all analytic functions on $\mathbb{D}.$ In 1884, Koenigs \cite{Koenigs} showed that if $\phi$ is a Schr\"{o}der map with fixed point $a\in \mathbb{D}$, then
the eigenvalues of the operator $C_{\phi}:H(\mathbb{D}) 	\rightarrow H(\mathbb{D}) $ are 
\begin{align*}
1, \phi'(a), \phi'(a)^2, \phi'(a)^3, \ldots.
\end{align*}

These eigenvalues are \emph{simple} (that have multiplicity one) and the eigenfunction $\sigma$ corresponding to the eigenvalue $\phi'(a)$ is called the \textit{Koenigs eigenfunction} for $\phi$. It follows that $\sigma^n$ is an eigenfunction corresponding to $\phi'(a)^n$ for each $n\in\mathbb{N}$, and all such eigenfunctions are scalar multiples of $\sigma^n$.  The sequence $(\sigma^n)_{n\in\mathbb{N}}$ will be referred to as the \emph{Koenigs sequence} for $\phi$.  The eigenfunction $\sigma$ does not necessarily belong to any of the Hardy spaces $H^p(\mathbb{D})$ for $p>0$. Characterizations of $H^p(\mathbb{D})$-membership of $\sigma$ have been obtained by Bourdon and Shapiro \cite{Paul1, Paul} and Poggi-Corradini \cite{Poggi}. In particular, they show that $\sigma\in H^p(\mathbb{D})$ for all $p>0$, or equivalently $\sigma^n\in H^2(\mathbb{D})$ for all $n\in\mathbb{N}$ if and only if the composition operator $C_\phi$ is a \emph{Riesz operator}, that is when the \emph{essential spectrum} of $C_\phi$ is the singleton $\{0\}$.

On the other hand a bounded operator $T$ on a separable Hilbert space $\mathcal{H}$ is \emph{complex symmetric} if $T$ has a self-transpose matrix representation with respect to some orthonormal basis for $\mathcal{H}$. An equivalent definition also exists. A \emph{conjugation} is a conjugate-linear operator $J:\mathcal{H}\to\mathcal{H}$ that satisfies the conditions \\

(a) $J$ is \emph{isometric}: $\langle Jf,Jg \rangle=\langle g,f \rangle$ $\forall$ $f,g\in\mathcal{H}$,\\

(b) $J$ is \emph{involutive}: $J^2=I$.\\ \\
 Then $T$ is $J$-\emph{symmetric} if $JT=T^*J$, and is called \emph{complex symmetric} if $T$ is $J$-symmetric for some conjugation $J$ on $\mathcal{H}$. A sequence $(f_n)_{n\in\mathbb{N}}$ is \emph{conjugate-orthogonal} in $\mathcal{H}$ if there exists a conjugation $J$ on $\mathcal{H}$ such that $\langle Jf_n,f_m\rangle=0$ for all $n\neq m$. An orthonormal basis $(e_n)_{n\in\mathbb{N}}$ for $\mathcal{H}$ is always conjugate-orthogonal. Indeed, just define the conjugation $Je_n=e_n$ for all $n\in\mathbb{N}$ and extend to all of $\mathcal{H}$ by conjugate-linearity. Hence $\langle Je_n,e_m\rangle=\delta_{m,n}$ where $\delta_{m,n}$ is the Kronecker delta. Complex symmetric operators are natural generalizations of complex symmetric matrices and normal operators, and their general study was initiated by
Garcia, Putinar, and Wogen (\cite{Garc2},\cite{Garc3},\cite{Wogen1},\cite{Wogen2}).  Conjugate-orthogonal systems were studied by Garcia and Putinar (\cite{Garc4}\cite{pamona}) as eigenvectors of complex symmetric operators.

The study of complex symmetric composition operators on $H^2(\mathbb{D})$ was initiated by Garcia and Hammond \cite{Garc1}. They showed that involutive disk automorphisms induce \emph{non-normal} complex symmetric composition operators. This was the first such example. Bourdon and Noor \cite{wal} proved that if $C_\phi$ is complex symmetric then $\phi$ must necessarily fix a point in $\mathbb{D}$. They further showed that if $\phi$ is a disk automorphism that is not elliptic of order two (involutive) or three, then $C_\phi$ is never complex sysmmetric.  Then Narayan, Sievewright and Thompson \cite{Narayan} discovered the first non-automorphic composition operators that are complex symmetric. Recently Narayan, Sievewright, Tjani \cite{Narayan Tjani} characterized all non-automorphic linear factional $\phi$ for which $C_\phi$ is complex symmetric. The analogous problem for the Hardy space of the half-plane $H^2(\mathbb{C_+})$ was recently solved by Noor and Severiano \cite{Noor Osmar}. For more general symbols the problem still remains open. It follows that the non-automorphic symbols $\phi$ of  interest from the point of view of complex symmetry are those that fix a point in $\mathbb{D}$, and that are also univalent (see \cite[Prop. 2.5]{Garc1}). These $\phi$ are precisely the \emph{univalent Scr\"{o}der maps} and the Koenigs eigenfunction $\sigma$ for $\phi$ is also univalent. In this case if $C_\phi$ is complex symmetric then it is a Riesz operator and the Koenigs sequence $(\sigma^n)_{n\in\mathbb{N}}$ is contained in $H^2(\mathbb{D})$ (\cite[Prop. 2.7]{Garc1}). 

The objective of this article is to study properties of the Koenigs sequence $(\sigma^n)_{n\in\mathbb{N}}$ and how they interact with the complex symmetry of $C_\phi$. The main result of this article can now be stated as follows (see Theorem \ref{Main 1}).

\begin{thm}\label{Main Theorem} Let $\phi$ be a Schr\"{o}der map with Koenigs eigenfunction $\sigma$. Then $C_{\phi}$ is complex symmetric if and only if the Koenigs sequence $(\sigma^n)_{n\in \mathbb{N}}$ is complete and conjugate-orthogonal in $H^2(\mathbb{D})$.
\end{thm}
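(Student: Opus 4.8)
The plan is to prove the two implications separately, using throughout that by Koenigs' theorem each $\sigma^n$ is an eigenvector of $C_\phi$: writing $\lambda=\phi'(a)$ with $0<|\lambda|<1$, one has $C_\phi\sigma^n=\lambda^n\sigma^n$, and the eigenvalues $\{\lambda^n\}_{n\geq 0}$ are pairwise distinct (with $\lambda^0=1$ corresponding to the constants, whose inclusion is essential since $\sigma^n(a)=0$ for $n\geq 1$). The backward implication is the cleaner of the two, so I would dispatch it first.

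For the backward direction, suppose $(\sigma^n)$ is complete and conjugate-orthogonal via a conjugation $J$ with $\langle J\sigma^n,\sigma^m\rangle=0$ for $n\neq m$. The goal is $JC_\phi=C_\phi^*J$, i.e.\ that the bounded form $B(f,g)=\langle JC_\phi f,g\rangle-\langle Jf,C_\phi g\rangle$, which is conjugate-linear in each slot, vanishes identically. Since $(\sigma^n)$ is complete it suffices to verify $B(\sigma^n,\sigma^m)=0$ for all $n,m$ and then invoke continuity. Using $C_\phi\sigma^n=\lambda^n\sigma^n$ and the conjugate-linearity of $J$ one finds $B(\sigma^n,\sigma^m)=\bigl(\overline{\lambda^n}-\overline{\lambda^m}\bigr)\langle J\sigma^n,\sigma^m\rangle$, which is $0$ when $n=m$ and, when $n\neq m$, because conjugate-orthogonality kills the factor $\langle J\sigma^n,\sigma^m\rangle$. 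Hence $C_\phi$ is $J$-symmetric. Completeness enters precisely to upgrade the vanishing of $B$ on the total set $\{\sigma^n\}$ to its vanishing on all of $H^2(\mathbb{D})$.

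For the forward direction, assume $C_\phi$ is $J$-symmetric. Conjugate-orthogonality is immediate: $JC_\phi=C_\phi^*J$ gives $C_\phi^*(J\sigma^n)=\overline{\lambda^n}(J\sigma^n)$, and for $n\neq m$ the identity $\overline{\lambda^n}\langle J\sigma^n,\sigma^m\rangle=\langle C_\phi^*J\sigma^n,\sigma^m\rangle=\langle J\sigma^n,C_\phi\sigma^m\rangle=\overline{\lambda^m}\langle J\sigma^n,\sigma^m\rangle$ forces $\langle J\sigma^n,\sigma^m\rangle=0$ since $\lambda^n\neq\lambda^m$. I also record the cited fact that complex symmetry forces $C_\phi$ to be a Riesz operator with $(\sigma^n)\subseteq H^2(\mathbb{D})$, and that simplicity of the eigenvalues makes the Riesz idempotents rank one, $P_n=\langle\,\cdot\,,J\sigma^n\rangle\,\sigma^n/\langle\sigma^n,J\sigma^n\rangle$; in particular $\langle\sigma^n,J\sigma^n\rangle\neq 0$.

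The completeness of $(\sigma^n)$ is the genuine obstacle, and it is where I expect to use that $C_\phi$ is a composition operator rather than an abstract complex symmetric Riesz operator (for the latter completeness can fail, e.g.\ a diagonal-plus-Volterra direct sum is complex symmetric and Riesz with the same eigenvalue list but deficient eigenvectors). I would set $M=\overline{\mathrm{span}}\{\sigma^n\}$, note that $M$ is $C_\phi$-invariant so $M^\perp$ is $C_\phi^*$-invariant, and observe that $C_\phi^*|_{M^\perp}$ is again Riesz. Any nonzero eigenvalue of $C_\phi^*|_{M^\perp}$ would be some $\overline{\lambda^n}$ with eigenvector in $\ker(C_\phi^*-\overline{\lambda^n})=\mathbb{C}\,J\sigma^n$; but $\langle\sigma^n,J\sigma^n\rangle\neq 0$ shows $J\sigma^n\notin M^\perp$, so $C_\phi^*|_{M^\perp}$ has empty point spectrum and, being Riesz, is quasinilpotent (spectral radius $0$). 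The crux is to rule out such a nontrivial quasinilpotent summand. Here I would feed in the composition-operator structure: the adjoint acts on reproducing kernels by $C_\phi^*K_w=K_{\phi(w)}$, the iterates $\phi^{[k]}$ converge to the Denjoy--Wolff point $a$, and $\{K_w:w\in\mathbb{D}\}$ is total in $H^2(\mathbb{D})$. Combining the vanishing of the spectral radius of $C_\phi^*|_{M^\perp}$ with the explicit behavior $(C_\phi^*)^kK_w=K_{\phi^{[k]}(w)}$ should force every $h\in M^\perp$ to be orthogonal to all kernels, whence $h=0$ and $M=H^2(\mathbb{D})$. Making this final elimination of the quasinilpotent part rigorous is the step I expect to require the most care.
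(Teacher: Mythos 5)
Your backward implication and the conjugate-orthogonality half of the forward implication are correct and coincide with the paper's argument (the latter is Garcia's lemma that eigenvectors of a $J$-symmetric operator corresponding to distinct eigenvalues are $J$-orthogonal). The genuine gap is exactly where you flagged it: the completeness of $(\sigma^n)_{n\in\mathbb{N}}$. Your reduction to showing that the quasinilpotent restriction $C_\phi^*|_{M^\perp}$ is trivial is reasonable in outline, but the proposed mechanism does not close. Knowing $\|(C_\phi^*)^kh\|^{1/k}\to 0$ for $h\in M^\perp$ and using $(C_\phi^*)^kK_w=K_{\phi^{[k]}(w)}$ only gives control of quantities like $\langle h,(\phi^{[k]})^j\rangle$; since $(\phi^{[k]})^j$ converges weakly in $H^2$ to the constant $a^j$, this yields no more than $h(0)=0$, not $h\equiv 0$. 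Moreover, your auxiliary claim $\langle\sigma^n,J\sigma^n\rangle\neq 0$ via rank-one Riesz idempotents presupposes that the algebraic multiplicity of each $\lambda^n$ equals one, a nontrivial fact you would need to cite or prove (the paper's observation that completeness implies $\langle J\sigma^n,\sigma^n\rangle\neq0$ is of no use here, since completeness is what you are trying to establish).

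The paper avoids the Riesz-operator machinery entirely. It invokes the Garcia--Hammond decomposition $K_a^{(n)}=a_nv_n+\dots+a_0v_0$ of the derivative-evaluation kernels at the fixed point into eigenvectors $v_j$ of $C_\phi^*$ for $\overline{\lambda}^j$; applying $J$ and spectral symmetry (each $Jv_j$ is a scalar multiple of $\sigma^j$ because the eigenvalues are simple) gives $JK_a^{(n)}\in\mathrm{span}\{\sigma^0,\dots,\sigma^n\}$, so $f\perp\sigma^n$ for all $n$ forces $(Jf)^{(n)}(a)=0$ for all $n$, hence $Jf\equiv0$ and $f\equiv0$. If you wish to salvage your own route, the efficient move is to transfer the quasinilpotence to $C_\phi$ itself: $J(M^\perp)$ is $C_\phi$-invariant and $C_\phi$ restricted to it is quasinilpotent, and pairing $g\circ\phi^{[k]}$ with $K_a^{(m)}$, where $m$ is the order of vanishing of $g$ at $a$, gives $\|g\circ\phi^{[k]}\|\geq c\,|g^{(m)}(a)|\,|\lambda|^{km}$, contradicting quasinilpotence unless $g\equiv0$ --- but this is essentially re-deriving the role of the kernels $K_a^{(n)}$ that the paper exploits directly.
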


A \emph{Riesz basis} is similar to an orthonormal basis (ONB), that is, it is the image of an ONB under an invertible operator. But Riesz bases are not necessarily conjugate-orthogonal. Let $\phi_a(z)=\frac{a-z}{1-\bar{a}z}$ be the involutive disk automorphism for some $a\in\mathbb{D}$. Then $(\phi_a^n)_{n\in\mathbb{N}}$ is a Riesz basis and is the Koenigs sequence for the Schr\"{o}der map \[\Phi_{a,\lambda}(z)=\phi_a(\lambda\phi_a(z))\] with fixed point $a$ and $\lambda\in\mathbb{D}$. We then apply Theorem \ref{Main Theorem} to show that this Riesz basis $(\phi_a^n)_{n\in\mathbb{N}}$ is \emph{not} conjugate-orthogonal (see Theorem \ref{Riesz basis not CS}).

\begin{thm} For $a,\lambda\in\mathbb{D}\setminus\{0\}$, the composition operator $C_{\Phi_{a,\lambda}}$ is not complex symmetric on $H^2(\mathbb{D})$, and therefore $(\phi_a^n)_{n\in\mathbb{N}}$ is not conjugate-orthogonal.
	\end{thm}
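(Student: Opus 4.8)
The plan is to invoke Theorem~\ref{Main Theorem}: since $\Phi_{a,\lambda}$ is a Schr\"oder map with fixed point $a$ and multiplier $\lambda$, its Koenigs sequence is $(\phi_a^n)_{n\in\mathbb{N}}$, so $C_{\Phi_{a,\lambda}}$ is complex symmetric if and only if this sequence is complete and conjugate-orthogonal. Thus the whole statement reduces to showing that $(\phi_a^n)_{n\in\mathbb{N}}$ fails to be conjugate-orthogonal. First I would verify the claim made just before the theorem, namely that $\sigma=\phi_a$ really is a Koenigs eigenfunction for $\Phi_{a,\lambda}$: a direct computation gives $\Phi_{a,\lambda}(a)=a$, and since $\phi_a$ is an involution with $\phi_a(a)=0$, one checks $\phi_a\circ\Phi_{a,\lambda}=\lambda\,\phi_a$, so $\sigma=\phi_a$ satisfies Koenigs' equation $\sigma\circ\Phi_{a,\lambda}=\Phi_{a,\lambda}'(a)\,\sigma$ with $\Phi_{a,\lambda}'(a)=\lambda$. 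This identifies the Koenigs sequence as $(\phi_a^n)_{n\in\mathbb{N}}$ and also shows completeness, since the powers of the single Blaschke factor $\phi_a$ span $H^2(\mathbb{D})$ (they are the image of the orthonormal basis $(z^n)$ under the unitary weighted-composition operator implementing the automorphism $\phi_a$).

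Having reduced to a concrete question about the fixed Riesz basis $(\phi_a^n)_{n\in\mathbb{N}}$, the core task is to prove that \emph{no} conjugation $J$ on $H^2(\mathbb{D})$ can make it conjugate-orthogonal, i.e.\ satisfy $\langle J\phi_a^n,\phi_a^m\rangle=0$ for all $n\neq m$. I would argue by contradiction: suppose such a $J$ exists. By Theorem~\ref{Main Theorem} this is equivalent to $C_{\Phi_{a,\lambda}}$ being $J$-symmetric, i.e.\ $JC_{\Phi_{a,\lambda}}=C_{\Phi_{a,\lambda}}^*J$. The most efficient route is probably to transport the problem to the standard basis. Since the unitary $U=C_{\phi_a}$ (suitably interpreted) sends $z^n\mapsto\phi_a^n$, conjugate-orthogonality of $(\phi_a^n)$ is equivalent to the existence of a conjugation making $(z^n)$ conjugate-orthogonal after a unitary change of variables; unwinding this should pin down the candidate conjugation very rigidly and expose an incompatibility with the specific geometry of $\phi_a$ when $a\neq 0$.

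Concretely, I expect the cleanest contradiction to come from examining the adjoint $C_{\Phi_{a,\lambda}}^*$ together with the known spectral picture. The eigenvalues of $C_{\Phi_{a,\lambda}}$ are $\{\lambda^n\}_{n\geq 0}$ with eigenvectors $\phi_a^n$, and if $C_{\Phi_{a,\lambda}}$ were $J$-symmetric then $J$ would map the eigenvectors of $C_{\Phi_{a,\lambda}}$ to eigenvectors of $C_{\Phi_{a,\lambda}}^*$ for the conjugate eigenvalues $\overline{\lambda^n}$. Because the eigenvalues are simple, this forces $J\phi_a^n$ to be a scalar multiple of the corresponding eigenvector $w_n$ of $C_{\Phi_{a,\lambda}}^*$, and the isometric-involutive constraints on $J$ then impose rigid normalization relations among these scalars. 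I would compute the eigenvectors $w_n$ of $C_{\Phi_{a,\lambda}}^*$ explicitly (these are expressible through reproducing kernels and the adjoint formula for composition operators), and then test the conjugation axioms $\langle J\phi_a^n,J\phi_a^m\rangle=\langle\phi_a^m,\phi_a^n\rangle$ against the inner products $\langle w_n,w_m\rangle$.

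The main obstacle, and where the real content lies, is this last consistency check: showing that the rigidly-determined candidate $J$ cannot simultaneously be isometric and involutive when $a\neq 0$ (whereas for $a=0$ the map $\Phi_{0,\lambda}(z)=\lambda z$ is linear and $C_{\Phi_{0,\lambda}}$ is normal, hence trivially complex symmetric, explaining the exclusion $a\in\mathbb{D}\setminus\{0\}$). The computation hinges on the fact that the Gram matrix of $(\phi_a^n)$ differs from the Gram matrix of $(w_n)$ in a way that cannot be reconciled by any diagonal scalar adjustment; I anticipate that evaluating the relevant inner products $\langle\phi_a^n,\phi_a^m\rangle$ via the change-of-variable unitary and comparing with $\langle w_n,w_m\rangle$ will produce an explicit numerical obstruction already at low indices (such as $n,m\in\{0,1,2\}$), which suffices to rule out conjugate-orthogonality for every $a\neq 0$.
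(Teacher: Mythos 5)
Your reduction via Theorem \ref{Main 1} and your identification of $\phi_a$ as the Koenigs eigenfunction of $\Phi_{a,\lambda}$ (with multiplier $\lambda$ and complete Koenigs sequence $(\phi_a^n)_{n\in\mathbb{N}}$) are all correct, and your general strategy --- a putative conjugation $J$ must carry the eigenvector $\phi_a^n$ of $C_{\Phi_{a,\lambda}}$ to a multiple of the eigenvector of $C_{\Phi_{a,\lambda}}^*$ for $\overline{\lambda}^n$, and the isometry of $J$ then forces numerical identities among inner products --- is exactly the mechanism that powers the actual proof. But the proposal stops precisely where the proof has to begin: you never exhibit the identity that fails, you only ``anticipate'' that comparing Gram matrices at low indices ``will produce an explicit numerical obstruction.'' That anticipated obstruction is the entire mathematical content of the theorem (it is what separates $a\neq 0$ from $a=0$), so as written there is a genuine gap.

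The paper closes this gap with a single concrete instance of your consistency check, quoted from Garcia--Hammond as relation \eqref{13}: if $C_\phi$ is $J$-symmetric with fixed point $a$ and normalized Koenigs function $\sigma$, then $J1$ is a unimodular-scalar multiple of $K_a/\|K_a\|$ and $J\sigma$ of $K_a^{(1)}/\|K_a^{(1)}\|$, so isometry of $J$ applied to the pair $(1,\sigma)$ gives $|\sigma(0)|=|K_a^{(1)}(a)|/(\|K_a\|\,\|K_a^{(1)}\|)$. With $\sigma=\phi_a$ (inner, hence of norm one) and $0<a<1$, the left side is $a$ while explicit computation of $K_a^{(1)}(a)=a/(1-a^2)^2$ and $\|K_a^{(1)}\|=(1+a^2)^{1/2}/(1-a^2)^{3/2}$ turns the right side into $a/(1+a^2)^{1/2}$, which forces $a=0$; a rotation $z\mapsto e^{i\theta}z$ reduces general $a\in\mathbb{D}\setminus\{0\}$ to this case by unitary equivalence. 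Note also that the paper's logical order is the reverse of yours: it refutes complex symmetry of $C_{\Phi_{a,\lambda}}$ directly and only then invokes Theorem \ref{Main 1} to conclude that $(\phi_a^n)_{n\in\mathbb{N}}$ is not conjugate-orthogonal; both directions are legitimate given the equivalence, but yours requires you to actually rule out every conjugation, which is the computation you have not done.
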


We next provide an example of a complete and conjugate-orthogonal sequence that is \emph{not} an ONB in $H^2(\mathbb{D})$ (see Theorem \ref{21}). 

\begin{thm}\label{z-a} The sequence $((z-a)^n)_{n\in\mathbb{N}}$ is complete and conjugate-orthogonal for all $a\in\mathbb{D}$, and is not an ONB when $a\neq 0$.
\end{thm}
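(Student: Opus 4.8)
The plan is to realize $((z-a)^n)_{n\in\mathbb{N}}$ as the Koenigs sequence of an explicit Schr\"{o}der map and then invoke Theorem \ref{Main Theorem}. Fix $\lambda\in(0,1)$ and consider the affine self-map
\[
\phi(z)=\lambda z + a(1-\lambda).
\]
Its image is the disk of radius $\lambda$ centered at $a(1-\lambda)$, and since $|a(1-\lambda)|+\lambda=|a|+\lambda(1-|a|)<1$ we have $\phi(\mathbb{D})\subseteq\mathbb{D}$; moreover $\phi$ fixes $a$ with $\phi'(a)=\lambda\in(0,1)$, so $\phi$ is a univalent Schr\"{o}der map. Setting $\sigma(z)=z-a$ we have $\sigma(a)=0$, $\sigma'(a)=1$ and $\sigma(\phi(z))=\lambda(z-a)=\phi'(a)\,\sigma(z)$, so $\sigma$ is exactly the Koenigs eigenfunction of $\phi$ and $((z-a)^n)_{n\in\mathbb{N}}$ is its Koenigs sequence. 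By Theorem \ref{Main Theorem} it therefore suffices to prove that $C_{\phi}$ is complex symmetric on $H^2(\mathbb{D})$.

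This is the heart of the argument and the step I expect to be the main obstacle. Since $\phi$ is a non-automorphic linear fractional map fixing the interior point $a$ (its second fixed point lying at $\infty$), I would deduce the complex symmetry of $C_{\phi}$ from the classification of complex symmetric linear fractional composition operators in \cite{Narayan,Narayan Tjani}. Alternatively one can exhibit the conjugation directly: writing $b=a(1-\lambda)$, Cowen's adjoint formula gives $C_{\phi}^{*}=T_{g}C_{\psi}$ with $\psi(z)=\bar\lambda z/(1-\bar b z)$ and $g(z)=1/(1-\bar b z)$, and one seeks an antilinear weighted-composition involution $Jf(z)=w(z)\,\overline{f(\overline{\eta(z)})}$ solving $C_{\phi}^{*}=JC_{\phi}J$. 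Matching the composition parts reduces this to the functional equation $\overline{\eta(\phi(\overline{\eta(z)}))}=\psi(z)$ for a linear fractional $\eta$, after which the weight $w$ is forced and the isometry and involutivity of $J$ are verified. Either route establishes that $C_{\phi}$ is $J$-symmetric for a suitable conjugation $J$; note that a naive product form $[f,g]=\Lambda(fg)$ for the associated bilinear form cannot work, since on the present sequence it would depend only on $n+m$ and could not separate $n=m=1$ from $n=0,\,m=2$, which is why the route through $C_{\phi}$ is essential.

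Granting the complex symmetry of $C_\phi$, Theorem \ref{Main Theorem} immediately yields that $((z-a)^n)_{n\in\mathbb{N}}$ is complete and conjugate-orthogonal. Completeness is in any case elementary: the passage between $\{(z-a)^n\}_{n=0}^{N}$ and $\{z^n\}_{n=0}^{N}$ is triangular and invertible, so the closed span of the sequence contains every polynomial, and polynomials are dense in $H^2(\mathbb{D})$. Finally, to see that the sequence is not an orthonormal basis when $a\neq 0$, it suffices to observe that its first two terms are not orthogonal: using $\langle z^{j},z^{k}\rangle=\delta_{jk}$ we get $\langle 1,\,z-a\rangle=\langle 1,z\rangle-\langle 1,a\rangle=-\bar a\neq 0$. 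Hence $((z-a)^n)_{n\in\mathbb{N}}$ is not even orthogonal, and in particular not an ONB, which completes the proof.
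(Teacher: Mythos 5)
Your reduction is sound in outline: $(z-a)^n$ is indeed the Koenigs sequence of the affine map $\phi(z)=\lambda z+a(1-\lambda)$ (your verification that $\phi$ is a self-map fixing $a$ with $\sigma\circ\phi=\lambda\sigma$ is correct), the completeness argument via the triangular change of basis is fine, and $\langle 1,z-a\rangle=-\bar a\neq 0$ correctly rules out orthonormality. The problem is that the entire content of the theorem --- the conjugate-orthogonality --- is concentrated in the one step you do not actually carry out: proving that $C_\phi$ is complex symmetric. Your first route, citing \cite{Narayan,Narayan Tjani}, invokes precisely the statement that affine self-maps with an interior fixed point induce complex symmetric composition operators; but that is one of the main results of \cite{Narayan}, and in this paper it appears as Corollary \ref{24}, \emph{deduced from} Theorem \ref{21} together with Theorem \ref{Main 1}. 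So within the paper's logical architecture your argument runs the implication backwards and renders Corollary \ref{24} circular; as a self-contained proof it amounts to outsourcing the theorem to the literature. Your second route --- solving $\overline{\eta(\phi(\overline{\eta(z)}))}=\psi(z)$ for a linear fractional $\eta$ and then checking that the resulting weighted antilinear composition map is an isometric involution --- is only a program, not a proof; nothing is computed, and the existence of a solution with the required properties is exactly what needs to be established.

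The paper closes this gap by a direct computation that bypasses $C_\phi$ entirely. It exhibits the concrete conjugation $J_a=JT_aC_{\phi_a}$ (with $(Jf)(z)=\overline{f(\bar z)}$, $T_a$ multiplication by $K_a/\Vert K_a\Vert$, and $\phi_a$ the involutive automorphism), computes
\begin{align*}
J_a(z-a)^n=(-1)^n\Vert K_a\Vert^{-2n-1}z^nK_a^{n+1},
\end{align*}
and then verifies the biorthogonality $\langle z^nK_a^{n+1},(z-a)^m\rangle=\delta_{nm}$ for $a\in(-1,1)$ by an explicit binomial-coefficient identity (Lemma \ref{biorthogonal}), finishing the general $a\in\mathbb{D}$ case by conjugating with the rotation unitary $U_\theta$. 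If you want to salvage your approach, you must either reproduce such an explicit conjugation and verify the orthogonality relations, or give an independent proof that $C_{\lambda z+a(1-\lambda)}$ is complex symmetric that does not rest on the result you are trying to prove. Your parenthetical remark that a bilinear form of the shape $[f,g]=\Lambda(fg)$ cannot separate $n=m=1$ from $n=0$, $m=2$ is a correct and useful observation, but it only tells you which conjugations cannot work, not which one does.
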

If $\phi(z)=cz+d$ is an affine self-map of $\mathbb{D}$ with fixed point $a\in\mathbb{D}$, then $((z-a)^n)_{n\in\mathbb{N}}$ is the Koenigs sequence for $\phi$. Therefore by Theorems \ref{Main Theorem} and \ref{z-a}  we get a simple proof for one of the main results of Narayan, Sievewright and Thompson \cite{Narayan} (see Corollary \ref{24}).

\begin{cor} If $\phi$ is an affine self-map with a fixed point in $\mathbb{D}$, then $C_\phi$ is complex symmetric on $H^2(\mathbb{D})$.
	\end{cor}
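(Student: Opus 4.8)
The plan is to reduce the corollary to the two results already in hand, Theorem~\ref{Main Theorem} and Theorem~\ref{z-a}, by identifying the Koenigs sequence of an affine symbol explicitly. Write $\phi(z)=cz+d$ and let $a\in\mathbb{D}$ be its fixed point, so that $a=ca+d$, i.e. $d=a(1-c)$. The requirement that $\phi$ map $\mathbb{D}$ into itself forces $|c|+|d|\le 1$, hence in particular $|c|\le 1$; the principal case is $0<|c|<1$, in which $\phi$ is a genuine Schr\"oder map with $\phi'(a)=c$.

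In this principal case I would first compute the Koenigs eigenfunction. Setting $\sigma(z)=z-a$, one checks directly that
\[
\sigma(\phi(z))=\phi(z)-a=cz+a(1-c)-a=c(z-a)=c\,\sigma(z),
\]
so $\sigma$ is an eigenfunction of $C_\phi$ on $H(\mathbb{D})$ with eigenvalue $c=\phi'(a)$. Since the Koenigs eigenvalue is simple, $z-a$ is (a scalar multiple of) the Koenigs eigenfunction, and therefore $((z-a)^n)_{n\in\mathbb{N}}$ is the Koenigs sequence for $\phi$. Now Theorem~\ref{z-a} guarantees that $((z-a)^n)_{n\in\mathbb{N}}$ is complete and conjugate-orthogonal in $H^2(\mathbb{D})$, and Theorem~\ref{Main Theorem} then immediately yields that $C_\phi$ is complex symmetric. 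This settles the main case.

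It remains to dispose of the degenerate values of $c$ lying outside the Schr\"oder hypothesis. If $|c|=1$ then $|c|+|d|\le 1$ forces $d=0$, so $\phi(z)=cz$ with fixed point $a=0$; here $C_\phi z^n=c^n z^n$ is diagonal on the monomial orthonormal basis and is thus complex symmetric via the conjugation fixing each monomial. If $c=0$ then $\phi\equiv d$ is constant and $C_\phi f=f(d)\,\mathbf{1}$ is a rank-one operator, which is readily seen to be complex symmetric. Combining the three cases gives the claim for every affine self-map with an interior fixed point.

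I expect no serious obstacle here: the substantive content is already packaged in Theorems~\ref{Main Theorem} and~\ref{z-a}, and the corollary amounts to the elementary observation that an affine Schr\"oder map has the simple Koenigs sequence $((z-a)^n)_{n\in\mathbb{N}}$. The only points demanding any care are the verification of the eigenfunction identity displayed above and the separate, routine treatment of the constant and rotation symbols, for which $\phi$ fails to be a Schr\"oder map and Theorem~\ref{Main Theorem} does not apply directly.
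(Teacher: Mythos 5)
Your proof is correct and follows essentially the same route as the paper: identify $((z-a)^n)_{n\in\mathbb{N}}$ as the Koenigs sequence of the affine symbol and invoke Theorems~\ref{Main 1} and~\ref{21}. Your separate treatment of the degenerate cases $|c|=1$ and $c=0$ is a welcome extra precision that the paper glosses over with the (not quite accurate) remark that $|c|<1$ necessarily.
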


The set of bounded operators that commute with an operator $T$ is called the \textit{commutant} of $T$ and is denoted by $T'$. Our main result about commutants of complex symmetric composition operators is the following (see Theorem \ref{commutant main}).

\begin{thm} Let $\phi$ be a Schr\"{o}der map such that $C_{\phi}$ is complex symmetric. Then each $A\in C_{\phi}'$ is also complex symmetric.
\end{thm}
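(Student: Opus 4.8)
The plan is to show that every operator in the commutant $C_\phi'$ is \emph{diagonalized} by the Koenigs sequence, and then to read off complex symmetry directly from conjugate-orthogonality, using the single conjugation attached to $(\sigma^n)_{n\in\mathbb{N}}$. Since $C_\phi$ is complex symmetric, Theorem \ref{Main Theorem} provides that $(\sigma^n)_{n\in\mathbb{N}}$ is complete and conjugate-orthogonal, so I fix a conjugation $J$ realizing this, with $\langle J\sigma^n,\sigma^m\rangle=0$ whenever $n\neq m$. The first step is to pin down the eigenstructure of $C_\phi$ on $H^2(\mathbb{D})$. Because $0<|\phi'(a)|<1$, the Koenigs eigenvalues $\phi'(a)^n$ have pairwise distinct moduli and are therefore distinct, and by Koenigs' theorem \cite{Koenigs} they are simple in $H(\mathbb{D})$. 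Any eigenvector of $C_\phi$ on $H^2(\mathbb{D})$ is a fortiori an eigenvector of $C_\phi$ on $H(\mathbb{D})$ for the same eigenvalue; hence that eigenvalue must be some $\phi'(a)^n$ and the corresponding eigenspace of $C_\phi$ on $H^2(\mathbb{D})$ is exactly the one-dimensional subspace $\mathbb{C}\,\sigma^n$.

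Next I would diagonalize the commutant. Let $A\in C_\phi'$. From $AC_\phi=C_\phi A$ and $C_\phi\sigma^n=\phi'(a)^n\sigma^n$ it follows that $C_\phi(A\sigma^n)=\phi'(a)^n(A\sigma^n)$, so $A\sigma^n$ lies in the one-dimensional eigenspace $\mathbb{C}\,\sigma^n$. Thus there exist scalars $\lambda_n$ with $A\sigma^n=\lambda_n\sigma^n$ for every $n$; that is, $A$ is diagonal with respect to the Koenigs sequence. As a consistency check, $C_\phi$ itself is such a diagonal operator, so the same argument recovers its $J$-symmetry.

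Finally I would verify that any diagonal $A$ is $J$-symmetric. Since $J$ is an isometric involution, the relation $JA=A^*J$ is equivalent to the identity $\langle Jg,Af\rangle=\langle Jf,Ag\rangle$ for all $f,g\in H^2(\mathbb{D})$. A short computation shows that $B(f,g):=\langle Jg,Af\rangle-\langle Jf,Ag\rangle$ is a bounded \emph{bilinear} form (the conjugate-linearity of $J$ cancels that of the inner product), so it suffices to check $B(\sigma^n,\sigma^m)=0$ for all $n,m$. There the identity reduces to $\lambda_n\langle J\sigma^m,\sigma^n\rangle=\lambda_m\langle J\sigma^n,\sigma^m\rangle$ (up to conjugation of the scalars, depending on the inner-product convention), which is trivial for $n=m$ and, by conjugate-orthogonality, has both sides equal to zero when $n\neq m$. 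Hence $A$ is $J$-symmetric and therefore complex symmetric.

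I expect no deep obstacle once the diagonalization is in place; the point needing genuine care is the passage from the system $(\sigma^n)_{n\in\mathbb{N}}$ to all of $H^2(\mathbb{D})$, since this sequence is only assumed \emph{complete} (its linear span is dense) and need not be a Riesz basis, so one cannot simply expand vectors in it. This is exactly why I reduce the verification to a bounded bilinear form and then invoke density of finite linear combinations of the $\sigma^n$ together with continuity. The conceptual heart of the argument is that the \emph{same} conjugation $J$, determined by the Koenigs sequence alone and independent of $A$, simultaneously witnesses the complex symmetry of every member of the commutant.
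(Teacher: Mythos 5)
Your proposal is correct and follows essentially the same route as the paper: diagonalize $A$ on the Koenigs sequence using the simplicity of the eigenvalues $\phi'(a)^n$, then verify $JA=A^*J$ by testing against the $\sigma^n$ and invoking completeness together with conjugate-orthogonality. (The form $B$ is conjugate-linear in each slot rather than bilinear under the paper's inner-product convention, but this does not affect the density argument.)
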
	

This is not true in general. If $\phi$ is an elliptic automorphism of order $2$, then $C_\phi$ is complex symmetric (see \cite{Garc1} and \cite{Noor}), but $C_{\phi}'$ contains composition operators that are not complex symmetric. On the other hand if $T$ is complex symmetric, then $T^n$ for $n\in\mathbb{N}$ are also complex symmetric. For composition operators with Schr\"{o}der symbols we obtain a strong converse of this (see Corollary \ref{Powers}).

\begin{cor} Let $\phi$ be a Schr\"{o}der map.  Then $C_{\phi}$ is complex symmetric if and only if $C_{\phi}^n$ is complex symmetric for some integer $n\geq 2.$
\end{cor}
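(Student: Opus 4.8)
The plan is to combine Theorem \ref{Main Theorem} with the elementary but crucial fact that a Schröder map and all of its iterates share one and the same Koenigs eigenfunction. The forward implication is the standard observation already noted in the text: if $JC_\phi=C_\phi^*J$ for some conjugation $J$, then a one-line induction gives $JC_\phi^n=(C_\phi^*)^nJ=(C_\phi^n)^*J$, so $C_\phi^n$ is $J$-symmetric for every $n\in\mathbb{N}$. Only the converse needs genuine argument.

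For the converse, fix an integer $n\geq 2$ and let $\psi=\phi\circ\phi\circ\cdots\circ\phi$ be the $n$-th iterate of $\phi$, so that $C_\phi^n=C_\psi$. First I would verify that $\psi$ is again a Schröder map: it fixes the same point $a$, and by the chain rule $\psi'(a)=\phi'(a)^n$, which satisfies $0<|\psi'(a)|<1$ because $0<|\phi'(a)|<1$. This is what allows Theorem \ref{Main Theorem} to be applied to $\psi$ as well as to $\phi$.

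The key step is to identify the Koenigs eigenfunction of $\psi$. Starting from the Schröder functional equation $\sigma\circ\phi=\phi'(a)\sigma$ for $\phi$, a short induction yields $\sigma\circ\psi=\phi'(a)^n\sigma=\psi'(a)\sigma$; that is, $C_\psi\sigma=\psi'(a)\sigma$. Since the Koenigs eigenvalues of the Schröder map $\psi$ are simple, the eigenspace of $C_\psi$ for $\psi'(a)$ is one-dimensional, so $\sigma$ is, up to a nonzero scalar, the Koenigs eigenfunction of $\psi$. Hence $\phi$ and $\psi$ have the identical Koenigs sequence $(\sigma^m)_{m\in\mathbb{N}}$.

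I would then conclude by applying Theorem \ref{Main Theorem} twice. For $\phi$ it asserts that $C_\phi$ is complex symmetric if and only if $(\sigma^m)_{m\in\mathbb{N}}$ is complete and conjugate-orthogonal; for $\psi$ it asserts that $C_\phi^n=C_\psi$ is complex symmetric if and only if the Koenigs sequence of $\psi$, which is the very same $(\sigma^m)_{m\in\mathbb{N}}$, is complete and conjugate-orthogonal. The two characterizing conditions coincide, so $C_\phi^n$ being complex symmetric for some (equivalently every) $n\geq 2$ is equivalent to $C_\phi$ being complex symmetric. I do not expect a serious obstacle here; the only point demanding care is the identification of the Koenigs eigenfunction of the iterate, which rests entirely on the simplicity of the Koenigs eigenvalues guaranteed by the theorem of Koenigs recalled in the introduction.
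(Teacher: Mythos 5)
Your proposal is correct, but it follows a genuinely different route from the paper. The paper deduces the corollary in one line from Theorem \ref{commutant main}: since $\phi^{[n]}$ is again a Schr\"{o}der map, $C_{\phi^{[n]}}=C_\phi^n$ is complex symmetric, and $C_\phi$ commutes with $C_{\phi^{[n]}}$, every element of the commutant of $C_{\phi^{[n]}}$ --- in particular $C_\phi$ itself --- is complex symmetric. You instead bypass the commutant machinery entirely: you note that $\sigma\circ\phi^{[n]}=\phi'(a)^n\sigma$ together with the simplicity of the Koenigs eigenvalues forces $\phi$ and $\phi^{[n]}$ to share the same Koenigs sequence $(\sigma^m)_{m\in\mathbb{N}}$ (up to harmless nonzero scalars, which affect neither completeness nor conjugate-orthogonality), and then apply Theorem \ref{Main 1} to both symbols, so the two complex-symmetry statements are equivalent to the same condition on $(\sigma^m)$. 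Your argument is self-contained at the level of Theorem \ref{Main 1}, makes transparent the real reason the equivalence holds (the characterizing condition is an iteration invariant), and in fact delivers the slightly sharper ``for some, equivalently for every, $n$'' phrasing directly; the paper's route is shorter on the page but leans on the stronger commutant theorem, whose proof itself rests on Theorem \ref{Main 1}. Both are valid.
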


This is not true in general even for composition operators, since if $\phi$ is an elliptic automorphism of order $4$ then $C_\phi$ is not complex symmetric (see \cite[Prop. 3.3]{wal}), but $C_\phi^2$ is elliptic of order two and is hence complex symmetric.

\section{Preliminaries}

\subsection{The Hardy space $H^2(\mathbb{D})$}The Hardy space $H^2:=H^2(\mathbb{D})$ is the space of all analytic functions in $\mathbb{D}$ given by $f(z)=\sum_{n=0}^{\infty}\widehat{f}(n)z^n,$ for which the norm  
\begin{align*}
\left\| f\right\| _{2}^2:= \sum_{n=0}^{\infty}| \widehat{f}(n)| ^2<\infty.
\end{align*}
The Hardy space $H^2$ is a Hilbert space with the following inner product: 
\begin{align*}
\left\langle f,g\right\rangle =\sum_{n=0}^{\infty}\widehat{f}(n)\overline{\widehat{g}(n)}
\end{align*}
where $f,g\in H^2.$ For each non-negative integer $n$ and $a\in \mathbb{D},$ the evaluation of the $n$th derivative of $f\in H^2$ at $a$ is  $f^{(n)}(a)=\langle f,K^{(n)}_{a}\rangle$ where
\begin{align}\label{5}
K_{a}^{(n)}(z)=\sum_{m=n}^{\infty}\frac{m!}{(m-n)!}\overline{a}^{m-n}z^m. 
\end{align} 
 We write $f=f^{(0)}$ and hence $K_a(z)=K_a^{(0)}(z)=\frac{1}{1-\bar{a}z}$ is the usual \emph{reproducing kernel} for $H^2$ at $a\in\mathbb{D}$. Let $\phi$ be a Schr\"{o}der map with fixed point $a\in\mathbb{D}$ and $\sigma$ its Koenigs eigenfunction normalized with $||\sigma||=1$. If $C_\phi$ is complex symmetric, then Garcia and Hammond \cite[p.176]{Garc1} proved the following relation we will use
 \begin{align}\label{13}
 	\left| \sigma(0) \right| 
 	=\frac{|K_{a}^{(1)}(a)|}{\left\| K_{a}\right\| \parallel  K_{a}^{(1)}\parallel }  .
 	\end{align}
 	
 \subsection{Conjugate-orthogonal vectors}

Complex symmetric operators satisfy the following \textit{spectral symmetry
property}. Let $T$ be $J$-symmetric on a Hilbert space $\mathcal{H}$ and $\lambda\in\mathbb{C}$, then  
\begin{equation}\label{Spec symmetry}
f\in \mathrm{Ker}(T-\lambda I)\Longleftrightarrow Jf\in \mathrm{Ker}(T^*-\overline{\lambda}I).
\end{equation}
A sequence $(f_n)_{n\in \mathbb{N}}$ is conjugate-orthogonal in $\mathcal{H}$ if $\langle Jf_n,f_m\rangle=0$ for $n\neq m$ and some conjugation $J$ on $\mathcal{H}$. Alternatively we shall just say $(f_n)_{n\in \mathbb{N}}$ is $J$-orthogonal. Note that if $(f_n)_{n\in \mathbb{N}}$ is also complete in $\mathcal{H}$, then $\langle Jf_n, f_n\rangle \neq0$ for all $n\in\mathbb{N}$.  This implies that $(Jf_n)_{n\in\mathbb{N}}$ is the unique sequence (upto scalar multiples) that is biorthogonal to $(f_n)_{n\in\mathbb{N}}$  (see \cite[Lemma 3.3.1]{Christensen}. In \cite{pamona}, Garcia proved the following.
\begin{lem}\label{J orth eigenvector}The eigenvectors of a $J$-symmetric operator $T$ corresponding to distinct eigenvalues are $J$-orthogonal.
\end{lem}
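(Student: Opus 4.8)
The plan is to exploit the defining relation $JT = T^*J$ of a $J$-symmetric operator together with the conjugate-linearity of $J$, reducing the claim to a one-line spectral computation. Suppose $Tf = \lambda f$ and $Tg = \mu g$ with $\lambda \neq \mu$; the goal is to show that $\langle Jf, g\rangle = 0$, which is exactly $J$-orthogonality of the two eigenvectors.

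First I would invoke the spectral symmetry property \eqref{Spec symmetry}: since $f \in \mathrm{Ker}(T - \lambda I)$, it yields $Jf \in \mathrm{Ker}(T^* - \overline{\lambda}I)$, that is $T^*(Jf) = \overline{\lambda}\, Jf$. (Alternatively, one can bypass \eqref{Spec symmetry} and argue directly from $JT = T^*J$: applying $J$ to $Tf=\lambda f$ and using $J^2 = I$ gives $T^*Jf = JTf = J(\lambda f) = \overline{\lambda}\,Jf$, where the final equality uses the conjugate-linearity of $J$.)

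Next I would evaluate $\langle T^*Jf, g\rangle$ in two ways. On one hand, using $T^*Jf = \overline{\lambda}\,Jf$, we get $\langle T^*Jf, g\rangle = \overline{\lambda}\langle Jf, g\rangle$. On the other hand, moving $T^*$ onto its adjoint, $\langle T^*Jf, g\rangle = \langle Jf, Tg\rangle = \langle Jf, \mu g\rangle = \overline{\mu}\langle Jf, g\rangle$. Equating the two expressions gives $(\overline{\lambda} - \overline{\mu})\langle Jf, g\rangle = 0$, and since $\lambda \neq \mu$ forces $\overline{\lambda} \neq \overline{\mu}$, we conclude $\langle Jf, g\rangle = 0$, as required.

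The computation itself presents no serious obstacle; the only delicate points are bookkeeping ones. I must track carefully where complex conjugates appear: the inner product on $H^2$ is linear in its first slot and conjugate-linear in its second, while $J$ is conjugate-linear, so any scalar pulled through $J$ or through the second argument of the inner product must be conjugated. Keeping these conventions consistent is the one place an error could slip in, but once they are fixed the two evaluations of $\langle T^*Jf, g\rangle$ match cleanly and the distinctness of the eigenvalues closes the argument.
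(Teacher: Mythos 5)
Your proof is correct and is essentially identical to the paper's: both use $JT=T^*J$ and conjugate-linearity to get $T^*Jf=\overline{\lambda}Jf$, then evaluate $\langle T^*Jf,g\rangle$ two ways and divide by $\overline{\lambda}-\overline{\mu}\neq 0$. No issues.
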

\begin{proof} Let $Tf_{i}=\lambda_i f_i$ for $i=1,2$ where $\lambda_1\neq\lambda_2$. Then
\[
\overline{\lambda_1}\langle Jf_1,f_2\rangle=\langle JTf_1,f_2\rangle=\langle T^*Jf_1,f_2\rangle=\langle Jf_1,Tf_2\rangle=\overline{\lambda_2}\langle Jf_1,f_2\rangle
\]
implies that $\langle Jf_1,f_2\rangle=0$.
	\end{proof}

If $\phi$ is a Schr\"{o}der map with fixed point $a\in\mathbb{D}$ and $\lambda=\phi'(a)$, then the eigenvalues $(\lambda^n)_{n\in\mathbb{N}}$ of $C_\phi$ are all distinct since $|\lambda|<1$. Therefore Lemma \ref{J orth eigenvector} gives us

\begin{pps}\label{29}
	Let $\phi$ be a Schr\"{o}der map with Koenigs eigenfunction $\sigma.$
	If $C_{\phi}$ is complex symmetric then $(\sigma^n)_{n\in \mathbb{N}}$ is conjugate-orthogonal in $H^2$.
\end{pps}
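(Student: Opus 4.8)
The plan is to derive the conjugate-orthogonality directly from Lemma \ref{J orth eigenvector}, once the powers $\sigma^n$ are recognized as genuine eigenvectors of $C_\phi$ acting on $H^2$. Write $\lambda=\phi'(a)$, so that $0<|\lambda|<1$ and $C_\phi\sigma^n=\lambda^n\sigma^n$. The eigenvalues $\lambda^n$ are pairwise distinct, since $\lambda^n=\lambda^m$ with $n>m$ would force $\lambda^{\,n-m}=1$, contradicting $|\lambda|<1$. As $C_\phi$ is complex symmetric it is $J$-symmetric for some conjugation $J$, so Lemma \ref{J orth eigenvector}, applied to the eigenvectors $\sigma^n$ attached to the distinct eigenvalues $\lambda^n$, gives $\langle J\sigma^n,\sigma^m\rangle=0$ whenever $n\neq m$. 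This is exactly the assertion that $(\sigma^n)_{n\in\mathbb{N}}$ is $J$-orthogonal, hence conjugate-orthogonal in $H^2$.

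The step that genuinely needs work, and the one I expect to be the main obstacle, is verifying that each $\sigma^n$ actually lies in $H^2$: a priori the Koenigs eigenfunction need belong to no Hardy space, and without membership the inner products above are meaningless. I would obtain this from the adjoint together with the spectral symmetry property (\ref{Spec symmetry}). Using the kernels $K_a^{(j)}$ of (\ref{5}) and the chain rule at the fixed point $\phi(a)=a$, one finds $(C_\phi f)^{(n)}(a)=\lambda^n f^{(n)}(a)+\sum_{j<n}c_{n,j}f^{(j)}(a)$; dualizing, the finite-dimensional space $V_n=\mathrm{span}\{K_a^{(0)},\dots,K_a^{(n)}\}\subset H^2$ is $C_\phi^*$-invariant and $C_\phi^*$ acts on it triangularly with distinct diagonal entries $\overline{\lambda^0},\dots,\overline{\lambda^n}$. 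Hence $C_\phi^*$ has an eigenvector $g_n\in V_n\subset H^2$ with eigenvalue $\overline{\lambda^n}$, and (\ref{Spec symmetry}) then places the nonzero vector $Jg_n\in H^2$ in $\mathrm{Ker}(C_\phi-\lambda^n I)$. Since $\lambda^n$ is a simple eigenvalue whose every eigenfunction is a scalar multiple of $\sigma^n$, the vector $Jg_n$ must be such a multiple, so $\sigma^n\in H^2$.

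Thus the substance of the proposition is precisely this $H^2$-membership, which is where complex symmetry does the real work: it conjugates $C_\phi$ to $C_\phi^*$ and thereby forces $C_\phi$ to inherit the full eigenvalue sequence $(\lambda^n)$ that $C_\phi^*$ always carries. I note that for the univalent Schr\"{o}der maps of primary interest one may shortcut this step, quoting that a complex symmetric $C_\phi$ is a Riesz operator and hence $(\sigma^n)_{n\in\mathbb{N}}\subset H^2$ (\cite[Prop. 2.7]{Garc1}); with membership granted, the proposition collapses to the single application of Lemma \ref{J orth eigenvector} described in the first paragraph.
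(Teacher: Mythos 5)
Your proposal is correct and its core is exactly the paper's own argument: the eigenvalues $\lambda^n$ are pairwise distinct because $0<|\lambda|<1$, and Lemma \ref{J orth eigenvector} applied to the eigenvectors $\sigma^n$ immediately yields $J$-orthogonality. Your second paragraph, verifying that each $\sigma^n$ actually lies in $H^2$ via the triangular action of $C_\phi^*$ on $\mathrm{span}\{K_a^{(0)},\dots,K_a^{(n)}\}$ together with the spectral symmetry property \eqref{Spec symmetry}, addresses a point the paper leaves implicit (it is settled only by the citation of \cite[Prop.~2.7]{Garc1} in the introduction); your argument for it is sound and in fact mirrors the decomposition \eqref{v_n} that the paper itself invokes later in the proof of Theorem \ref{Main 1}.
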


 \section{Complex symmetry and Koenigs eigenfuctions}
In this section, we characterize the complex symmetry of $C_\phi$ with Schr\"{o}der symbol $\phi$ in terms of the conjugate-orthogonality of its Koenigs sequence $(\sigma^n)_{n\in \mathbb{N}}$.

\begin{thm}\label{Main 1}Let $\phi$ be a Schr\"{o}der map with Koenigs eigenfunction $\sigma$. Then $C_{\phi}$ is complex symmetric if and only if the Koenigs sequence $(\sigma^n)_{n\in \mathbb{N}}$ is complete and conjugate-orthogonal in $H^2$.
\end{thm}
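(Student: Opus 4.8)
The plan is to prove both implications by exploiting the duality between the eigenfunctions of $C_\phi$ and those of its adjoint $C_\phi^*$, together with the elementary principle that a biorthogonal partner of a \emph{complete} sequence of eigenvectors must itself consist of eigenvectors of the adjoint, with conjugated eigenvalues. Throughout I would write $a$ for the fixed point and $\lambda=\phi'(a)$, so that $C_\phi\sigma^n=\lambda^n\sigma^n$ with the $\lambda^n$ distinct and each eigenspace one-dimensional by Koenigs' theorem (here $\sigma^0\equiv 1$). The key computation underlying this principle is that if $(\sigma^m)$ is complete then $\langle\sigma^m,C_\phi^*g\rangle=\langle C_\phi\sigma^m,g\rangle=\lambda^m\langle\sigma^m,g\rangle$ pins down $C_\phi^*g$ as soon as we know the numbers $\langle\sigma^m,g\rangle$.

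For the backward implication I would assume $(\sigma^n)$ is complete and $J$-orthogonal and set $c_n=\langle J\sigma^n,\sigma^n\rangle\neq 0$, so that $(J\sigma^n)$ is biorthogonal to $(\sigma^n)$ up to these scalars. Feeding $g=J\sigma^n$ into the identity above and using completeness of $(\sigma^m)$ yields $C_\phi^*J\sigma^n=\overline{\lambda}^n J\sigma^n$. Since $J$ is conjugate-linear, it follows that $JC_\phi\sigma^n=\overline{\lambda}^n J\sigma^n=C_\phi^* J\sigma^n$ for every $n$. As $JC_\phi$ and $C_\phi^*J$ are bounded operators that agree on the dense span of $(\sigma^n)$, they coincide, giving $JC_\phi=C_\phi^*J$; hence $C_\phi$ is $J$-symmetric.

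For the forward implication, conjugate-orthogonality is exactly Proposition~\ref{29}, so only completeness must be established, and I would obtain it by first producing a complete sequence of adjoint eigenfunctions. The chain rule gives $(f\circ\phi)^{(n)}(a)=\lambda^n f^{(n)}(a)+\sum_{k<n}c_{n,k}f^{(k)}(a)$, which translates into $C_\phi^*K_a^{(n)}=\overline{\lambda}^n K_a^{(n)}+\sum_{k<n}\overline{c_{n,k}}K_a^{(k)}$; thus $C_\phi^*$ is triangular on each $\mathrm{span}\{K_a^{(0)},\dots,K_a^{(N)}\}$ with distinct diagonal entries, producing eigenfunctions $\psi_n\in\mathrm{span}\{K_a^{(0)},\dots,K_a^{(n)}\}$ with $C_\phi^*\psi_n=\overline{\lambda}^n\psi_n$. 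Because $f\perp K_a^{(n)}$ for all $n$ forces $f^{(n)}(a)=0$ and hence $f\equiv 0$, the family $(K_a^{(n)})$ — and therefore $(\psi_n)$ — is complete in $H^2$. Invoking complex symmetry via the spectral symmetry property \eqref{Spec symmetry}, $J\psi_n\in\ker(C_\phi-\lambda^n I)=\mathbb{C}\sigma^n$, so $\sigma^n$ is a nonzero multiple of $J\psi_n$; since $J$ is an isometric bijection it preserves completeness, and $(\sigma^n)$ inherits it from $(\psi_n)$. (That $\sigma^n\in H^2$ at all follows because a complex symmetric $C_\phi$ has univalent symbol and is a Riesz operator.)

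I expect the main obstacle to be the backward direction, where the natural temptation is to \emph{construct} a conjugation by prescribing its values on the $\sigma^n$ and extending by conjugate-linearity; this fails because the complete $J$-orthogonal system $(\sigma^n)$ need not be a Schauder or Riesz basis. The resolution is to build no new conjugation at all, but to reuse the given $J$ and verify the operator identity $JC_\phi=C_\phi^*J$ on the dense span, which requires only completeness and boundedness rather than a basis expansion. The other point demanding care is the completeness of the adjoint eigenfunctions $(\psi_n)$; this is precisely where the reproducing-kernel derivatives $K_a^{(n)}$ of Section~2 enter, and it is their completeness that is ultimately transported back to $(\sigma^n)$ through the conjugation $J$.
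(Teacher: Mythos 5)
Your proposal is correct and follows essentially the same route as the paper: the backward direction is the same computation showing $C_\phi^*J$ and $JC_\phi$ agree on the dense span of $(\sigma^n)$, and the forward direction combines Proposition \ref{29} with the triangular action of $C_\phi^*$ on the kernel derivatives $K_a^{(n)}$ (which the paper cites from Garcia--Hammond rather than rederiving) and spectral symmetry to transport completeness from $(K_a^{(n)})$ to $(\sigma^n)$ via $J$.
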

\begin{proof} Let $a\in \mathbb{D}$ be the fixed point of $\phi$ and denote $\lambda=\phi'(a)$. First suppose $(\sigma^n)_{n\in \mathbb{N}}$ is complete and $J$-orthogonal. Then for $n,m\in\mathbb{N}$, we have
	\begin{align*}
	\langle (C_{\phi}^*J-JC_{\phi})\sigma^n , \sigma^m\rangle=\langle J\sigma^n,C_\phi\sigma^m\rangle-\bar{\lambda}^n\langle J\sigma^n,\sigma^m\rangle=(\bar{\lambda}^m-\bar{\lambda}^n)\langle J\sigma^n, \sigma^m\rangle.
	\end{align*}	
	Since $\langle J\sigma^n,\sigma^m\rangle=0$ for all $n\neq m$, the completeness of $(\sigma^n)_{n\in \mathbb{N}}$ implies that $C_{\phi}^*J=JC_{\phi},$ and therefore that $C_\phi$ is $J$-symmetric. Conversely suppose $C_\phi$ is $J$-symmetric. By Proposition \ref{29} we know that $(\sigma^n)_{n\in \mathbb{N}}$ is conjugate-orthogonal. Hence we only need to show that $(\sigma^n)_{n\in \mathbb{N}}$ is complete in $H^2$. For each $n\in\mathbb{N}$, the function $K^{(n)}_{a}$ can be written in the form
	\begin{align}\label{v_n}
	K^{(n)}_{a}=a_nv_n+a_{n-1}v_{n-1}+\ldots+a_0v_0,
	\end{align}
	where $v_j$ is an eigenvector for $C_{\phi}^*$ corresponding to the eigenvalue $\overline{\lambda}^j$, and the $a_j$ are scalars (see \cite[Prop. 2.6]{Garc1}). Now apply $J$ on both sides of \eqref{v_n} and note that each $Jv_j$ is a scalar multiple of $\sigma^j$ by spectral symmetry (see \eqref{Spec symmetry}) and the fact that the $\lambda^j$ are simple eigenvalues of $C_\phi$. We then get
	\[
	JK^{(n)}_{a}=c_n\sigma^n+c_{n-1}\sigma^{n-1}+\ldots+c_0\sigma.
	\]
	To conclude the completeness of $(\sigma^n)_{n\in \mathbb{N}}$, we note that if some $f\in H^2$ is orthogonal to $(\sigma^n)_{n\in \mathbb{N}}$ then $f\perp JK_a^{(n)}$ and hence $Jf\perp K_a^{(n)}$ for all $n\in\mathbb{N}$. But $(Jf)^{(n)}(a)=0$ for all $n$ implies $Jf\equiv 0$ and hence $f\equiv 0$. Therefore $(\sigma^n)_{n\in \mathbb{N}}$ is complete in $H^2$.
\end{proof}	
\section{A Riesz basis that is not conjugate-orthogonal}\label{2}
Let $\phi_a(z)=\frac{a-z}{1-\bar{a}z}$ be the involutive disk automorphism for some $a\in\mathbb{D}\setminus\{0\}$. The goal of this section is to prove that although $(\phi_a^n)_{n\in\mathbb{N}}$ is a Koenigs sequence and a Riesz basis, it is not conjugate-orthogonal. The sequence $(\phi_a^n)_{n\in\mathbb{N}}$ is a Riesz basis since $\phi_a^n=C_{\phi_a}z^n$ for $n\in\mathbb{N}$ and it is also the Koenigs sequence for the Schr\"{o}der map \[\Phi_{a,\lambda}(z)=\phi_a(\lambda\phi_a(z))\] since $C_{\Phi_{a,\lambda}}\phi_a=\lambda\phi_a$ with $\lambda\in\mathbb{D}\setminus\{0\}$ and fixed point $a$.  

\begin{thm}\label{Riesz basis not CS} For $a,\lambda\in\mathbb{D}\setminus\{0\}$, the composition operator $C_{\Phi_{a,\lambda}}$ is not complex symmetric on $H^2$, and therefore $(\phi_a^n)_{n\in\mathbb{N}}$ is not conjugate-orthogonal.
\end{thm}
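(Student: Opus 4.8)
The plan is to prove that $C_{\Phi_{a,\lambda}}$ is not complex symmetric by applying Theorem \ref{Main 1} in its contrapositive form: since $(\phi_a^n)_{n\in\mathbb{N}}$ is the Koenigs sequence for $\Phi_{a,\lambda}$ and is already known to be complete (being a Riesz basis), it suffices to show that this sequence is \emph{not} conjugate-orthogonal. Equivalently, and more directly, I would aim to contradict the necessary numerical condition \eqref{13} that any complex symmetric $C_\phi$ must satisfy. This second route seems cleaner, because \eqref{13} reduces the whole question to evaluating three explicit quantities for the symbol $\Phi_{a,\lambda}$.

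Concretely, I would first normalize the Koenigs eigenfunction. The eigenfunction for $\Phi_{a,\lambda}$ with eigenvalue $\lambda$ is $\phi_a$ (since $C_{\Phi_{a,\lambda}}\phi_a=\lambda\phi_a$), so the normalized eigenfunction is $\sigma=\phi_a/\|\phi_a\|$. Then I would compute the left-hand side $|\sigma(0)|=|\phi_a(0)|/\|\phi_a\|=|a|/\|\phi_a\|$, using $\phi_a(0)=a$. For the right-hand side I need the three reproducing-kernel quantities $\|K_a\|$, $\|K_a^{(1)}\|$, and $|K_a^{(1)}(a)|$ at the fixed point $a$. These are standard: from the series \eqref{5} one has $\|K_a\|^2=K_a(a)=(1-|a|^2)^{-1}$, and the norm of $K_a^{(1)}$ together with the value $K_a^{(1)}(a)=\langle K_a^{(1)},K_a^{(1)}\rangle^{1/2}$-type evaluations can be read off directly from \eqref{5} as explicit rational functions of $|a|^2$. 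I would also need $\|\phi_a\|^2$, which is elementary to compute from the Taylor coefficients of $\phi_a$ (or by an automorphism-invariance argument). Substituting everything into \eqref{13} yields an algebraic identity that would have to hold if $C_{\Phi_{a,\lambda}}$ were complex symmetric.

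The key observation I expect to exploit is that condition \eqref{13} contains no dependence on $\lambda$: its left side $|\sigma(0)|$ and its right side are both determined purely by the fixed point $a$, whereas the symbol $\Phi_{a,\lambda}$ genuinely depends on $\lambda$. So I would evaluate both sides and show they are \emph{unequal} for all $a\in\mathbb{D}\setminus\{0\}$, giving a contradiction and hence proving $C_{\Phi_{a,\lambda}}$ is not complex symmetric regardless of $\lambda$. Once this is established, Theorem \ref{Main 1} forces $(\phi_a^n)_{n\in\mathbb{N}}$ to fail completeness or conjugate-orthogonality; since it is a Riesz basis (hence complete), it cannot be conjugate-orthogonal, which is exactly the second assertion.

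The main obstacle will be the bookkeeping in the reproducing-kernel computation: assembling $\|K_a\|$, $\|K_a^{(1)}\|$, and $|K_a^{(1)}(a)|$ from \eqref{5} correctly and simplifying the resulting expression in $|a|^2$, together with the value of $\|\phi_a\|^2$, into a form where the inequality between the two sides of \eqref{13} becomes transparent. I would expect the discrepancy to reduce to a clean polynomial inequality in $t=|a|^2\in(0,1)$ that visibly fails to be an equality (for instance, one side carrying a factor that forces strict inequality whenever $a\neq0$), and the bulk of the work is verifying that this reduction is done without algebraic slips rather than any conceptual difficulty.
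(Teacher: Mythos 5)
Your proposal is correct and follows essentially the same route as the paper: both verify that the necessary condition \eqref{13}, with $\sigma=\phi_a$ (already of unit norm since $\phi_a$ is inner), fails for every $a\neq 0$ independently of $\lambda$, and then invoke Theorem \ref{Main 1} plus completeness of the Riesz basis $(\phi_a^n)_{n\in\mathbb{N}}$ to conclude it is not conjugate-orthogonal. The only cosmetic difference is that the paper first treats $a\in(0,1)$ and reduces the general case by a rotation, whereas you would carry $|a|$ through the kernel computations directly; both reduce to the same identity that forces $a=0$.
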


\begin{proof} First let $0<a<1$. Observe that by \eqref{5} we have 
	\[
	K_{a}^{(1)}(z)=\sum_{n=1}^{\infty}na ^{n-1}z^n.
	\] 
	Since $\phi_a$ is inner we have $||\phi_a||=1$. Moreover, simple computations show that
\begin{align}\label{7}
\	| K_{a}^{(1)}(a)| =\frac{a}{(1-a^2)^2}\ \ \text{and} \ \parallel K_{a}^{(1)}\parallel =\frac{(a^2+1)^{1/2}}{(1-a^2)^{3/2}}.
\end{align}
Now suppose on the contrary that $C_{\Phi_{a,\lambda}}$ is complex symmetric. Then \eqref{13} with $\sigma=\phi_{a}$ and $\phi=\Phi_{a,\lambda}$ gives
\begin{align}\label{8}
|K_{a}^{(1)}(a)|=\left| \phi_{a}(0)\right| \left\| K_{a}\right\| \parallel  K_{a}^{(1)}\parallel.
\end{align} 
By combining \eqref{7} and \eqref{8}, we obtain
\[
\frac{a}{(a^2-1)^2}=\frac{a(1+a^2)^{1/2}}{(a^2-1)^2}
\]
which is possible only if $a=0.$ However, this contradicts the hypothesis that $a\in (0,1)$ and therefore $C_{\Phi_{a,\lambda}}$ is not complex symmetric. For the general case $a\in\mathbb{D}\setminus\{0\}$, let $\theta$ be a real number such that $a=\left|a\right|e^{i\theta}$. Then 
\begin{equation}\label{9} 
e^{-i\theta}\left( \phi_{a}\left( e^{i\theta}z\right) \right) =e^{-i\theta}\left( \frac{a-e^{i\theta}z}{1-\overline{a}e^{i\theta}z}\right) =\frac{\left| a\right| -z}{1-\left| a\right| z}=\phi_{\left| a\right|}(z).
\end{equation}
By using \eqref{9}, we can express $\Phi_{a,\lambda}$ in term of $\Phi_{|a|,\lambda}$ 
\begin{align*}	e^{-i\theta}\Phi_{a,\lambda}(e^{i\theta}z)
&=e^{-i\theta}\phi_{\alpha}(\lambda\phi_{\alpha}(e^{i\theta}z))\nonumber=e^{-i\theta}\phi_{\alpha}(\lambda e^{i\theta}e^{-i\theta}\phi_{\alpha}(e^{i\theta}z))\nonumber\\
&=e^{-i\theta}\phi_{\alpha} (e^{i\theta}\lambda \phi_{|\alpha|}(z))=\phi_{|\alpha|}(\lambda\phi_{|\alpha|}(z))\\
&=\Phi_{|a|,\lambda}(z).
\end{align*}
This implies that $C_{e^{i\theta}z}C_{\Phi_{a,\lambda}}C_{e^{i\theta}z}^*=C_{\Phi_{|a|,\lambda}}$. Since $C_{e^{i\theta}z}$ is a unitary operator on $H^2$, it follows that $C_{\Phi_{a,\lambda}}$ is unitarily equivalent to $C_{\Phi_{|a|,\lambda}}$ and therefore $C_{\Phi_{a,\lambda}}$ is not complex symmetric. Finally since $(\phi_a^n)_{n\in\mathbb{N}}$ is the Koenigs sequence for $\Phi_{a,\lambda}$ and is complete in $H^2$, by Theorem \ref{Main 1} it follows that $(\phi_a^n)_{n\in\mathbb{N}}$ is not conjugate-orthogonal.
\end{proof}

\section{A complete and conjugate-orthogonal Koenigs sequence}
Let $a\in\mathbb{D}$ and $\sigma(z)=z-a$. In this section we show that $(\sigma^n)_{n\in\mathbb{N}}$ is a complete and conjugate-orthogonal sequence that is not an ONB when $a\neq 0$. First recall that for $r\in \mathbb{R}$ and $j\in \mathbb{N},$ the generalized binomial coefficient is defined as
\[
\binom{r}{j}=
\frac{r(r-1)\ldots (r-(j-1))}{j!}
\]
if $j>0$ and $\binom{r}{j}=1$ if $j=0$.
If $r\in\mathbb{N}$ with $0\leq j\leq r$, then $\binom{r}{j}=\frac{r!}{j!(r-j)!}$ is the usual binomial coefficient. In general, the binomial series for $(1-z)^r$ is 
\begin{align}\label{19}
(1-z)^r=\sum_{j=0}^{\infty}{\binom{r}{j}}(-z)^j
\end{align}
for each $z\in \mathbb{D}$. We begin by showing that $(\sigma^n)_{n\in\mathbb{N}}$ has a biorthogonal sequence $(z^nK_a^{n+1})_{n\in\mathbb{N}}$ when $a\in(-1,1)$. 
\begin{lem}\label{biorthogonal}
For each $a\in (-1,1)$ and non-negative integers $n$ and $m,$ we have
\begin{align}\label{20}
\langle z^nK_a^{n+1},(z-a)^m\rangle=\delta_{nm},
\end{align}
where $\delta$ is the Kronecker delta function.
\end{lem}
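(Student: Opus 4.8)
The plan is to identify $z^nK_a^{n+1}$ with a scalar multiple of the derivative reproducing kernel $K_a^{(n)}$ and then invoke the reproducing property $f^{(n)}(a)=\langle f,K_a^{(n)}\rangle$, which reduces the whole computation to differentiating a polynomial.

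First I would observe that, since $a\in(-1,1)$ is real, $K_a(z)=\frac{1}{1-az}$ has real Taylor coefficients and
\[
z^nK_a^{n+1}(z)=\frac{z^n}{(1-az)^{n+1}}.
\]
Expanding $(1-az)^{-(n+1)}$ as a negative binomial series gives $z^nK_a^{n+1}(z)=\sum_{k=n}^{\infty}\binom{k}{n}a^{k-n}z^k$, and comparing with \eqref{5} (using $\tfrac{m!}{(m-n)!}=n!\binom{m}{n}$) yields the key identity
\[
z^nK_a^{n+1}=\frac{1}{n!}\,K_a^{(n)}.
\]
Equivalently, this follows from $K_a^{(n)}=\partial_{\bar a}^{\,n}K_a$ by differentiating $1/(1-\bar a z)$ in $\bar a$ directly. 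Establishing this identity is the one substantive step; everything afterward is a short computation.

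With the identity in hand, I would apply the derivative reproducing property $\langle f,K_a^{(n)}\rangle=f^{(n)}(a)$ to write
\[
\langle z^nK_a^{n+1},(z-a)^m\rangle=\frac{1}{n!}\,\langle K_a^{(n)},(z-a)^m\rangle=\frac{1}{n!}\,\overline{\langle (z-a)^m,K_a^{(n)}\rangle}=\frac{1}{n!}\,\overline{\left.\frac{d^n}{dz^n}(z-a)^m\right|_{z=a}}.
\]
Finally I would compute the derivative: for $n\le m$ we have $\frac{d^n}{dz^n}(z-a)^m=\frac{m!}{(m-n)!}(z-a)^{m-n}$, which vanishes at $z=a$ unless $m=n$, while for $n>m$ the derivative is identically zero. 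Hence $\left.\frac{d^n}{dz^n}(z-a)^m\right|_{z=a}=n!\,\delta_{nm}$, a real number, and therefore $\langle z^nK_a^{n+1},(z-a)^m\rangle=\tfrac{1}{n!}\cdot n!\,\delta_{nm}=\delta_{nm}$, as claimed.

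As a cross-check, one can bypass the reproducing kernel and pair Taylor coefficients directly: $z^nK_a^{n+1}$ has coefficients $\binom{k}{n}a^{k-n}$ (for $k\ge n$) and $(z-a)^m$ has coefficients $\binom{m}{k}(-a)^{m-k}$ (for $0\le k\le m$), so the inner product collapses to $a^{m-n}\sum_{k=n}^{m}\binom{k}{n}\binom{m}{k}(-1)^{m-k}$; the subset identity $\binom{m}{k}\binom{k}{n}=\binom{m}{n}\binom{m-n}{k-n}$ then reduces the alternating sum to $(1-1)^{m-n}$, which is exactly $\delta_{nm}$. In either route the only conceptual content is the identification $z^nK_a^{n+1}=\tfrac{1}{n!}K_a^{(n)}$, and I expect the verification of that identity (rather than the vanishing of the resulting sum) to be the main obstacle; the rest is routine bookkeeping.
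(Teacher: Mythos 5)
Your proposal is correct, but its main route is genuinely different from the paper's. The paper proves the lemma by brute force: it expands $z^nK_a^{n+1}$ via the negative binomial series, disposes of the case $m\le n$ by comparing lowest and highest powers of $z$, and for $m>n$ pairs Taylor coefficients directly, reducing the result to the alternating-sum identity $\sum_{j=0}^k(-1)^j\binom{k}{j}=0$. You instead identify $z^nK_a^{n+1}=\tfrac{1}{n!}K_a^{(n)}$ (valid here because $a$ is real, so $\bar a=a$ in \eqref{5}) and then invoke the derivative-reproducing property, so that the inner product becomes $\tfrac{1}{n!}\overline{\left.\tfrac{d^n}{dz^n}(z-a)^m\right|_{z=a}}=\delta_{nm}$. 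This is cleaner and more conceptual: it explains \emph{why} $(z^nK_a^{n+1})_n$ is the biorthogonal system for $((z-a)^n)_n$ --- it is, up to normalization, the family of derivative kernels at $a$, and biorthogonality is just Taylor's theorem --- whereas the paper's computation offers no such insight. Your cross-check by coefficient pairing is essentially the paper's argument, organized around the subset identity $\binom{m}{k}\binom{k}{n}=\binom{m}{n}\binom{m-n}{k-n}$ instead of the bare alternating sum; both collapse to $(1-1)^{m-n}$. The only care needed in your main route is the complex conjugation when moving $K_a^{(n)}$ between slots of the inner product, which you handle correctly since all quantities involved are real for $a\in(-1,1)$.
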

\begin{proof} First observe that $K^{n+1}_a(z)=(1-az)^{-(n+1)}.$ By using \eqref{19}, we get
\begin{equation}\label{znKa}
z^nK_a^{n+1}(z)=\sum_{j=0}^{\infty}\binom{-n-1}{j}(-a)^jz^{j+n}.
\end{equation}	
Then \eqref{20} holds for $m\leq n$ since the smallest term in \eqref{znKa} is $z^n$, whereas the largest term in $(z-a)^m$ is $z^m$. Therefore let $m> n$ and defining $k=m-n$ we get
\begin{align*}
\langle z^nK_a^{n+1}, (z-a)^m\rangle&=\langle \sum_{j=0}^{\infty}\binom{-n-1}{j}(-a)^jz^{j+n},\sum_{j=0}^m\binom{m}{j}(-a)^{m-j}z^j\rangle\\
&=\sum_{j=0}^{m-n}\binom{-n-1}{j}\binom{m}{n+j}(-a)^{m-n}\\
&=(-a)^{k}\sum_{j=0}^{k}\frac{(-n-1)(-n-2)\cdots (-n-j)}{j!}\frac{m!}{(n+j)!(k-j)!}\\
&=(-a)^{k}\sum_{j=0}^{k}\frac{(-1)^j(n+1)(n+2)\cdots (n+j)}{j!}\frac{m!}{(n+j)!(k-j)!}\\
&=(-a)^{k}\sum_{j=0}^{k}\frac{(-1)^j(n+j)!}{j!n!}\frac{m!}{(n+j)!(k-j)!}\\
&=(-a)^k\frac{m!}{n!k!}\sum_{j=0}^k(-1)^j\binom{k}{j}=0.	\end{align*}
The last sum vanishes by the identity $\sum_{j=0}^k(-1)^j\binom{k}{j}=0$ for $k>0$ \cite[Cor. 2]{Ruiz}. 
\end{proof}

Let $J$ be the conjugation on $H^2$ defined by $(Jf)(z)=\overline{f(\bar{z})}$,  $T_a$ the operator of multiplication by the normalized reproducing kernel $ K_{a}/\left\| K_{a}\right\|$ and $\phi_a(z)=\frac{a-z}{1-\bar{a}z}$. If $a\in(-1,1)$, then the operator defined by
\[
J_a=JT_a C_{\phi_a}
\]
 is a conjugation on $H^2$ by \cite[Prop. 2.3]{Narayan}. We arrive at the main result. 

\begin{thm}\label{21}
If $a \in \mathbb{D}$, then $((z-a)^n)_{n\in \mathbb{N}}$ is complete and conjugate-orthogonal in $H^2$. It is not an ONB when $a\neq 0$. 
\end{thm}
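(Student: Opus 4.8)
The plan is to establish the three assertions separately, treating the real case $a\in(-1,1)$ first and then passing to a general $a\in\mathbb{D}$ by a rotation. Completeness is immediate: since $(z-a)^0=1$ and $(z-a)^n=z^n+(\text{lower-order terms})$, an induction on $n$ shows that the linear span of $((z-a)^n)_{n\in\mathbb{N}}$ contains every monomial, hence every polynomial, and polynomials are dense in $H^2$. This holds for all $a\in\mathbb{D}$.

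For conjugate-orthogonality with real $a\in(-1,1)$, I would use the conjugation $J_a=JT_aC_{\phi_a}$ together with the biorthogonal sequence $(z^nK_a^{n+1})_{n\in\mathbb{N}}$ from Lemma \ref{biorthogonal}. The crux is to compute $J_a(z-a)^n$ explicitly. A direct simplification gives $\phi_a(z)-a=-(1-a^2)zK_a(z)$, whence $C_{\phi_a}(z-a)^n=(-(1-a^2))^n z^nK_a^n$. Multiplying by $K_a/\|K_a\|$ (the action of $T_a$) raises the kernel power by one, producing a nonzero scalar multiple of $z^nK_a^{n+1}$; and since $a$ is real the coefficients in \eqref{znKa} are real, so $J$ fixes $z^nK_a^{n+1}$. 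Hence $J_a(z-a)^n=c_n z^nK_a^{n+1}$ with $c_n\neq 0$, and Lemma \ref{biorthogonal} yields
\[
\langle J_a(z-a)^n,(z-a)^m\rangle=c_n\langle z^nK_a^{n+1},(z-a)^m\rangle=c_n\delta_{nm},
\]
which vanishes for $n\neq m$. Thus $((z-a)^n)$ is $J_a$-orthogonal.

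To reach a general $a\in\mathbb{D}$, write $a=|a|e^{i\theta}$ and let $U=C_{e^{-i\theta}z}$, a unitary on $H^2$. A short computation gives $U(z-|a|)^n=e^{-in\theta}(z-a)^n$, so each $(z-a)^n$ is a unimodular multiple of $U(z-|a|)^n$. Now $J':=UJ_{|a|}U^*$ is again a conjugation, the sequence $(U(z-|a|)^n)$ is $J'$-orthogonal because $((z-|a|)^n)$ is $J_{|a|}$-orthogonal and $U$ is unitary, and conjugate-orthogonality is unaffected by scalar multiples of the vectors. Therefore $((z-a)^n)$ is $J'$-orthogonal, completing the conjugate-orthogonality for every $a\in\mathbb{D}$.

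Finally, for $a\neq 0$ the sequence is not even orthogonal, since $\langle (z-a)^0,(z-a)^1\rangle=\langle 1,z-a\rangle=-\bar a\neq 0$, so it cannot be an ONB. I expect the explicit evaluation of $J_a(z-a)^n$ in the real case to be the main obstacle: it is the one place where the interplay of the composition $C_{\phi_a}$, the kernel multiplier $T_a$, and the conjugation $J$ must be tracked precisely so as to land exactly on the biorthogonal sequence $z^nK_a^{n+1}$.
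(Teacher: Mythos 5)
Your proposal is correct and follows essentially the same route as the paper: the identity $\phi_a(z)-a=-(1-a^2)zK_a(z)$ showing $J_a(z-a)^n$ is a nonzero multiple of the biorthogonal element $z^nK_a^{n+1}$ (with $J$ acting trivially because the coefficients are real), the rotation $a=|a|e^{i\theta}$ transported through the conjugation $U_\theta^*J_{|a|}U_\theta$, completeness via the span of $((z-a)^n)$ being the polynomials, and non-orthogonality of $1$ and $z-a$. The only cosmetic differences are that you phrase completeness as an induction on monomials rather than via the translation operator $f(z)\mapsto f(z-a)$, and you conjugate by $C_{e^{-i\theta}z}$ rather than its adjoint, which yields the same conjugation.
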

\begin{proof} First suppose $a\in(-1,1)$. It is easy to see that \[\phi_{a}(z)- a=-(1-a^2)zK_a(z)=-zK_a(z)/||K_a||^2.\] Then we get
\begin{align}\label{22}
J_{a}(z-a)^n&=K_a(z)(\phi_a(z)-a)^n/||K_a||=(-1)^n||K_a||^{-2n-1}z^nK^{n+1}_{a}(z).
\end{align}
Therefore $J_a(z-a)^n$ is a scalar multiple of $z^nK_a^{n+1}$ for $n\in\mathbb{N}$ and hence $(z-a)^n$ is $J_a$-orthogonal by Lemma \ref{biorthogonal} for $a\in(-1,1)$. For the general case $a\in \mathbb{D}$, let $\theta$ be a real number such that $a=e^{i\theta}|a|$. Since $J_{|a|}$ is a conjugation for each $a\in \mathbb{D}$ and $U_{\theta}=C_{e^{i\theta}z}$ is a unitary operator, it follows that $U_{\theta}^*J_{|a|}U_{\theta}$ is also a conjugation. Therefore we get
\begin{align*}
\langle U_{\theta}^*J_{|a|}U_{\theta}(z-a)^n, (z-a)^m\rangle&=\langle J_{|a|}U_{\theta} (z-a)^n, U_{\theta}(z-a)^m\rangle\\
&=\langle J_{|a|}(e^{i\theta}z-a)^n,(e^{i\theta}z-a)^m\rangle\\
&=e^{-i\theta(n+m)}\langle J_{|a|}(z-|a|)^n, (z-|a|)^m\rangle   
\end{align*}	
for $m,n\in\mathbb{N}$. Now since $(z-|a|)^n$ is $J_{|a|}$-orthogonal by the first part, it follows that $((z-a)^n)_{n\in\mathbb{N}}$ is $U_{\theta}^*J_{|a|}U_{\theta}$-orthogonal. We now prove the completeness of $((z-a)^n)_{n\in\mathbb{N}}$. Note that the operator $U:f(z)\to f(z-a)$ is unbounded on $H^2$ and has the polynomials $\mathbb{C}[z]$ in its domain. By $Uz^n=(z-a)^n$ for $n\in\mathbb{N}$ it is enough to show that $U\mathbb{C}[z]=\mathbb{C}[z]$. This is clear since for any polynomial $p$ we have $U(p(z+a))=p(z)$ where $p(z+a)$ is also a polynomial. The non-orthonormality is clear since $1\not\perp(z-a)^n$ for $n>0$ when $a\neq 0$.
\end{proof}

As an immediate application of Theorem \ref{21} we obtain one of the main results of  Narayan, Sievewright and Thompson \cite{Narayan}. 
\begin{cor}\label{24}
If $\phi$ is an affine self-map with a fixed point in $\mathbb{D}$, then $C_\phi$ is complex symmetric on $H^2$.
\end{cor}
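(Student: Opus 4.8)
The plan is to identify the Koenigs sequence of $\phi$ explicitly and then read off the conclusion from Theorems \ref{Main 1} and \ref{21}. Writing the affine map as $\phi(z)=cz+d$ and using the fixed-point relation $a=ca+d$, I would first recast it in the form $\phi(z)-a=c(z-a)$, so that $\phi'(a)=c$ is constant. The Schwarz--Pick lemma then gives $|c|=|\phi'(a)|\le 1$, and I would organize the argument according to the size of $|c|$.

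The central case is $0<|c|<1$, where $\phi$ is a Schr\"{o}der map. Here a one-line computation identifies the Koenigs eigenfunction as $\sigma(z)=z-a$: indeed $\sigma(\phi(z))=\phi(z)-a=c(z-a)=\phi'(a)\sigma(z)$, so that $C_\phi\sigma=\phi'(a)\sigma$. Consequently the Koenigs sequence for $\phi$ is precisely $((z-a)^n)_{n\in\mathbb{N}}$. By Theorem \ref{21} this sequence is complete and conjugate-orthogonal in $H^2$, whence Theorem \ref{Main 1} immediately yields that $C_\phi$ is complex symmetric. This is the short proof promised in the introduction; in this case there is essentially nothing left to do, since all the substance already resides in the proofs of those two theorems.

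What remains is to dispose of the degenerate affine maps, and this is the only place where I expect to spend genuine effort. If $|c|=1$, then $\phi$ is an affine disk automorphism fixing an interior point; as the affine automorphisms of $\mathbb{D}$ are exactly the rotations, $\phi(z)=cz$ and $C_\phi z^n=c^n z^n$ is diagonal in the monomial basis, hence normal and therefore complex symmetric. If $c=0$, then $\phi\equiv a$ is constant and $C_\phi f=f(a)\mathbf{1}=\langle f,K_a\rangle\mathbf{1}$ has rank one; I would show it is complex symmetric by passing to the subspace $M=\mathrm{span}\{\mathbf{1},K_a\}$, which has dimension at most two and is reducing for $C_\phi$ (it contains the ranges of both $C_\phi$ and $C_\phi^*$, and both operators annihilate $M^\perp$). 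Every operator on a space of dimension at most two is complex symmetric, so choosing a suitable conjugation on $M$ together with any conjugation on $M^\perp$ exhibits $C_\phi$ as complex symmetric.

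Assembling the three cases completes the proof. The main obstacle is not the Schr\"{o}der case itself, which is a direct corollary of Theorems \ref{Main 1} and \ref{21}, but rather the routine-looking verification that the non-Schr\"{o}der affine self-maps reduce to normal or rank-one operators; these are exactly the classes already known to be complex symmetric, so the difficulty is only one of bookkeeping rather than of method.
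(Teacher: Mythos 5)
Your proof is correct and, in the essential case $0<|c|<1$, is exactly the paper's argument: identify $((z-a)^n)_{n\in\mathbb{N}}$ as the Koenigs sequence via $\sigma(\phi(z))=c\,\sigma(z)$ and invoke Theorems \ref{Main 1} and \ref{21}. The only difference is that you also treat the non-Schr\"{o}der cases $c=0$ (rank one, handled on the two-dimensional reducing subspace $\mathrm{span}\{\mathbf{1},K_a\}$) and $|c|=1$ (necessarily a rotation, hence normal), which the paper silently excludes with the remark that ``$|c|<1$ necessarily''; this extra bookkeeping is sound and arguably makes the statement as literally written fully covered.
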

\begin{proof} Let $\phi(z)=cz+d$ with fixed point $a=\frac{d}{1-c}\in\mathbb{D}$. Then we see that 
	\[
	C_\phi(z-a)^n=c^n(z-a)^n
	\]
	where $|c|<1$ necessarily and hence $((z-a)^n)_{n\in\mathbb{N}}$ is the Koenigs sequence for $\phi$. Therefore $C_\phi$ is complex symmetric by Theorems \ref{Main 1} and \ref{21}.
	\end{proof}
\section{Commutants of complex symmetric composition operators}\label{4}
In this section, we study how the complex symmetry of $C_{\phi}$ affects the structure of its commutant $C_{\phi}'$ when $\phi$ is a Schr\"{o}der symbol. The main result is the following.

\begin{thm}\label{commutant main} Let $\phi$ be a Schr\"{o}der map such that $C_{\phi}$ is complex symmetric on $H^2$. Then each $A\in C_{\phi}'$ is complex symmetric.
	\end{thm}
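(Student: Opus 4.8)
The plan is to prove the stronger statement that each $A\in C_{\phi}'$ is $J$-symmetric for the \emph{same} conjugation $J$ that witnesses the complex symmetry of $C_\phi$. By Theorem \ref{Main 1}, the hypothesis guarantees that the Koenigs sequence $(\sigma^n)_{n\in\mathbb{N}}$ lies in $H^2$, is complete, and is $J$-orthogonal for some conjugation $J$ satisfying $JC_\phi=C_\phi^*J$. Writing $\lambda=\phi'(a)$, recall that $C_\phi\sigma^n=\lambda^n\sigma^n$ and that, by Koenigs' theorem, these eigenvalues are simple; since any $H^2$-eigenfunction is also an $H(\mathbb{D})$-eigenfunction, each eigenspace $\mathrm{Ker}(C_\phi-\lambda^nI)$ is precisely the one-dimensional span of $\sigma^n$.

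First I would use that $A$ commutes with $C_\phi$ to diagonalize it against the Koenigs sequence. If $AC_\phi=C_\phi A$, then $C_\phi(A\sigma^n)=AC_\phi\sigma^n=\lambda^nA\sigma^n$, so $A\sigma^n$ lies in the one-dimensional eigenspace spanned by $\sigma^n$; hence $A\sigma^n=\mu_n\sigma^n$ for scalars $\mu_n$. Taking adjoints, $A^*$ commutes with $C_\phi^*$. By spectral symmetry \eqref{Spec symmetry}, each $J\sigma^n$ is an eigenvector of $C_\phi^*$ for $\bar{\lambda}^n$, and these eigenvalues remain simple because $J$ maps $\mathrm{Ker}(C_\phi-\lambda^nI)$ bijectively onto $\mathrm{Ker}(C_\phi^*-\bar{\lambda}^nI)$. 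The same eigenspace-preservation argument then gives $A^*J\sigma^n=\nu_nJ\sigma^n$ for scalars $\nu_n$.

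The key step is to relate the two families of scalars via the $J$-orthogonality. Setting $c_n=\langle J\sigma^n,\sigma^n\rangle$, which is nonzero by completeness (as noted after Lemma \ref{J orth eigenvector}), I would compute $\langle A\sigma^n,J\sigma^n\rangle$ in two ways: directly it equals $\mu_n\langle\sigma^n,J\sigma^n\rangle=\mu_n\bar{c}_n$, while moving $A$ across the inner product gives $\langle\sigma^n,A^*J\sigma^n\rangle=\bar{\nu}_n\bar{c}_n$. Since $c_n\neq0$, this forces $\nu_n=\bar{\mu}_n$. Consequently, using the conjugate-linearity of $J$,
\[
JA\sigma^n=\bar{\mu}_nJ\sigma^n=\nu_nJ\sigma^n=A^*J\sigma^n \qquad (n\in\mathbb{N}).
\]

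Finally I would pass from the Koenigs sequence to all of $H^2$. The operators $JA$ and $A^*J$ are bounded and conjugate-linear, and they agree on every $\sigma^n$, hence on the dense linear span of $(\sigma^n)_{n\in\mathbb{N}}$ provided by completeness; therefore $JA=A^*J$ on $H^2$, so $A$ is $J$-symmetric and thus complex symmetric. I expect the main obstacle to be the diagonalization step: it relies crucially on the simplicity of the Koenigs eigenvalues to force the eigenspaces to be one-dimensional, so that $A$ and $A^*$ act as genuine scalars on $\sigma^n$ and $J\sigma^n$. Without simplicity one would only conclude that $A$ preserves each eigenspace, and the clean scalar identity $\nu_n=\bar{\mu}_n$—the heart of the argument—would no longer be available.
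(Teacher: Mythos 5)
Your proof is correct and follows essentially the same route as the paper: diagonalize $A$ against the Koenigs sequence using the simplicity of the eigenvalues, then use the completeness and $J$-orthogonality of $(\sigma^n)_{n\in\mathbb{N}}$ from Theorem \ref{Main 1} to conclude $JA=A^*J$. The only (harmless) variation is in the middle step: the paper obtains $A^*J\sigma^n=\overline{a_n}J\sigma^n$ from one orthogonality computation against all $\sigma^m$ plus completeness, whereas you first diagonalize $A^*$ on $(J\sigma^n)$ via spectral symmetry and simplicity of the adjoint eigenvalues, and then pin down the scalar using $\langle J\sigma^n,\sigma^n\rangle\neq 0$.
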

\begin{proof} Let $a\in \mathbb{D}$ be the fixed point of $\phi$, $\sigma$ its Koenigs eigenfunction and $\lambda=\phi'(a)$. If $A$ commutes with $C_{\phi}$ then 
	$
	C_{\phi}A\sigma^n=\lambda^nA\sigma^n
	$
	 for each $n\in\mathbb{N}$. By Koenigs' Theorem there must exist $a_n\in\mathbb{C}$ such that $A\sigma^n=a_n\sigma^n$ for $n\in\mathbb{N}$. If $C_\phi$ is $J$-symmetric, then for $n,m\in\mathbb{N}$ we have
	 \[
	 \left\langle \sigma^m, (A^*-\overline{a_n}I)J\sigma^n \right\rangle =\left\langle  (A-a_n I)\sigma^m, J\sigma^n\right\rangle 
	 =(a_m-a_n) \left\langle \sigma^m, J\sigma^n \right\rangle.
	 \]
	 Since $(\sigma^n)_{n\in\mathbb{N}}$ is complete $J$-orthogonal by Theorem \ref{Main 1}, we obtain $A^*J\sigma^n=\overline{a_n}J\sigma^n.$ Hence
	 \begin{align*}
	 A^*J\sigma^n=\overline{a_n}J\sigma^n=J(a_n\sigma^n)=JA\sigma^n
	 \end{align*}
	 and the completeness of $(\sigma^n)_{n\in\mathbb{N}}$ implies $A^*J=JA$. 
	\end{proof}

The conclusion of Theorem \ref{commutant main} does not hold in general. Let $\phi=\phi_{a}\circ(-\phi_{a})$ with $a\in \mathbb{D}\backslash\{0\}$. Then $\phi$ is elliptic of order $2$ and $C_\phi$ is complex symmetric. If $\Phi_{a,\lambda}=\phi_{a}\circ(\lambda \phi_{a})$ where $\lambda\in \mathbb{D}\backslash\{0\}$, then $\Phi_{a,\lambda}$ commutes with $\phi$ and hence $C_{\Phi_{a,\lambda}}\in C_{\phi}'$ even though $C_{\Phi_{a,\lambda}}$ is \emph{not} complex symmetric by Theorem \ref{Riesz basis not CS}. An immediate implication of Theorem \ref{commutant main} is the following interesting result.

\begin{cor}\label{Powers} Let $\phi$ be a Schr\"{o}der map.  Then $C_{\phi}$ is complex symmetric if and only if $C_{\phi}^n$ is complex symmetric for some integer $n\geq 2$.
\end{cor}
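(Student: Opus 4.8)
The plan is to reduce the entire statement to Theorem \ref{Main 1} by observing that $\phi$ and each of its iterates share one and the same Koenigs sequence. The forward implication is just the general principle noted before the statement: if $C_\phi$ is $J$-symmetric, so that $JC_\phi=C_\phi^*J$, then an easy induction gives $JC_\phi^n=(C_\phi^*)^nJ=(C_\phi^n)^*J$, so $C_\phi^n$ is $J$-symmetric for every $n\geq 1$. Thus the content lies entirely in the converse.

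For the converse, fix an integer $n\geq 2$ and let $\phi_n=\phi\circ\cdots\circ\phi$ be the $n$-th iterate of $\phi$, so that $C_\phi^n=C_{\phi_n}$. First I would verify that $\phi_n$ is again a Schr\"{o}der map: since $\phi(a)=a$ we have $\phi_n(a)=a$, and the chain rule gives $\phi_n'(a)=\lambda^n$ where $\lambda=\phi'(a)$, so $0<|\lambda^n|<1$. Next I would show that $\phi$ and $\phi_n$ have the same Koenigs sequence. Since $C_{\phi_n}\sigma=C_\phi^n\sigma=\lambda^n\sigma$, the function $\sigma$ is an eigenfunction of $C_{\phi_n}$ for the eigenvalue $\lambda^n=\phi_n'(a)$; because this eigenvalue is simple by Koenigs' theorem applied to $\phi_n$, the function $\sigma$ must be a scalar multiple of the Koenigs eigenfunction of $\phi_n$. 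Hence $(\sigma^k)_{k\in\mathbb{N}}$ is the Koenigs sequence for $\phi_n$ just as it is for $\phi$.

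With this identification in place the corollary is immediate. Applying Theorem \ref{Main 1} to the Schr\"{o}der map $\phi_n$, the operator $C_\phi^n=C_{\phi_n}$ is complex symmetric if and only if $(\sigma^k)_{k\in\mathbb{N}}$ is complete and conjugate-orthogonal in $H^2$. Applying the same theorem to $\phi$, this same condition on $(\sigma^k)_{k\in\mathbb{N}}$ is equivalent to the complex symmetry of $C_\phi$. Chaining the two equivalences gives that $C_\phi^n$ is complex symmetric for some (equivalently, every) $n\geq 2$ precisely when $C_\phi$ is, which is the assertion.

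I do not expect any genuine obstacle here: once Theorem \ref{Main 1} is available, the only points requiring care are the two routine facts that $\phi_n$ is a Schr\"{o}der map and that it inherits $\sigma$ as its Koenigs eigenfunction. The first is the chain rule, and the second rests on the simplicity of the Koenigs eigenvalues. The conceptual crux — and the reason the result holds for composition operators with Schr\"{o}der symbols but not for general operators — is precisely that complex symmetry has been characterized intrinsically through the Koenigs sequence, an invariant shared by $\phi$ and all its iterates.
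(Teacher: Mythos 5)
Your proof is correct, but it takes a genuinely different route from the paper's. The paper treats the converse as a one-line consequence of Theorem \ref{commutant main}: since $C_\phi$ commutes with $C_{\phi^{[n]}}=C_\phi^n$ and the latter is a complex symmetric composition operator with Schr\"{o}der symbol, every element of its commutant --- in particular $C_\phi$ --- is complex symmetric. You instead bypass the commutant theorem entirely and go back to Theorem \ref{Main 1}, observing that $\phi$ and its iterate $\phi^{[n]}$ share (up to the harmless scalar from normalization) the same Koenigs eigenfunction, hence the same Koenigs sequence, so the characterization ``complete and conjugate-orthogonal Koenigs sequence'' is literally the same condition for both maps; this gives both implications at once. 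Your two auxiliary checks --- that $\phi^{[n]}$ is a Schr\"{o}der map via the chain rule, and that simplicity of the eigenvalue $\lambda^n$ of $C_{\phi^{[n]}}$ forces $\sigma$ to be its Koenigs eigenfunction --- are exactly what is needed and are correct. What each approach buys: the paper's derivation showcases the commutant theorem as the stronger structural statement (and the corollary comes for free once that is proved), while yours is more self-contained, makes transparent \emph{why} the result holds (the characterizing invariant is shared by all iterates), and even yields the slightly sharper formulation ``for some, equivalently every, $n\geq 2$'' without extra work. Since Theorem \ref{commutant main} is itself proved from Theorem \ref{Main 1}, the two arguments ultimately rest on the same foundation.
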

\begin{proof} If $C_\phi$ is complex symmetric then clearly so is $C_\phi^n$ for all $n\in\mathbb{N}$. Conversely suppose $C_{\phi}^n$ is complex symmetric for some $n\geq 2$.  Clearly the $n$-th composite $\phi^{[n]}$ of $\phi$ is also a Schr\"{o}der map. Since $C_{\phi^{[n]}}=C_{\phi}^n$ is complex symmetric and $C_{\phi}$ commutes with $C_{\phi^{[n]}},$ it follows from Theorem \ref{commutant main} that $C_{\phi}$ is complex symmetric.
\end{proof}

Again this is not true in general, since if $\phi$ is an elliptic automorphism of order $4$ then $C_\phi$ is \emph{not} complex symmetric (see \cite[Prop. 3.3]{wal}), but $C_\phi^2$ is elliptic of order two and is hence complex symmetric.

\section*{Acknowledgement}
This work constitutes a part of the doctoral dissertation of the second author under the supervision of the first author.

\bibliographystyle{amsplain}

\end{document}